\let\turc\c
\renewcommand{\c}{\mathfrak{c}}
\newcommand{\bpm}{\begin{pmatrix}}
\newcommand{\epm}{\end{pmatrix}}
\newcommand{\mz}{\ensuremath{\mathbb Z}}
\newcommand{\mymod}{\ensuremath{\negthickspace \negmedspace \pmod}}
\newcommand{\shortmod}{\ensuremath{\negthickspace \negthickspace \negthickspace \pmod}}
\newcommand{\intR}{\int_{-\infty}^{\infty}}
\newcommand{\sumstar}{\sideset{}{^*}\sum}
\theoremstyle{plain}		
	\newtheorem{mytheo}{Theorem} [section]
	\newtheorem{myprop}[mytheo]{Proposition}
	\newtheorem{mycoro}[mytheo]{Corollary}
     \newtheorem{mylemma}[mytheo]{Lemma}
	\newtheorem{mydefi}[mytheo]{Definition}
\theoremstyle{remark}
\numberwithin{equation}{section}
\numberwithin{figure}{section}
\begin{document}

\author{Matthew P. Young}
 \address{Department of Mathematics \\
 	  Texas A\&M University \\
 	  College Station \\
 	  TX 77843-3368 \\
 		U.S.A.}

 \email{myoung@math.tamu.edu}
 \thanks{This material is based upon work supported by the National Science Foundation under agreement No. DMS-2001306.  Any opinions, findings and conclusions or recommendations expressed in this material are those of the authors and do not necessarily reflect the views of the National Science Foundation.  }

\begin{abstract} 
We improve on the spectral large sieve inequality for symmetric-squares.  We also prove a lower bound showing that the most optimistic upper bound is not true for this family.
\end{abstract}

 \title{On the spectral large sieve inequality for symmetric-squares}
\maketitle
\section{Introduction}
\subsection{Notation and motivation}
Suppose that $\mathcal{F}$ is a family of automorphic forms with associated $L$-functions $L(f,s) = \sum_{n} \lambda_f(n) n^{-s}$, $f \in \mathcal{F}$.  We normalize $L(f,s)$ to have a functional equation under $s \rightarrow 1-s$.  We define a norm on a bilinear form associated to $\mathcal{F}$ by
\begin{equation}
\label{eq:bilineardef}
 \mathcal{B}(\mathcal{F}, N) = 
 \max_{|{\bf a}| = 1} 
 \sum_{f \in \mathcal{F}}
 \Big| \sum_{N \leq n \leq 2N} a_n \lambda_f(n)\Big|^2,
\end{equation}
where ${\bf a} = (a_n)$ and $|{\bf a}|^2 = \sum_{N \leq n \leq 2N} |a_n|^2$.  In practice, $\mathcal{F}$ will be a finite part of a larger infinite family, which we finitize by specifying the size of the conductors.  
It is also common to study modified versions of \eqref{eq:bilineardef}, which could include an arithmetical weight such as $w_{f}^{-1}$, where $w_f = \text{Res}_{s=1} L(f \otimes \overline{f}, s)$.

A large sieve inequality for $\mathcal{F}$ is an upper bound on $\mathcal{B}(\mathcal{F}, N)$.  
A strong bound reflects orthgonality properties of the coefficients $\lambda_f(n)$ and has applications to moments of $L$-functions, the distribution of their zeros, etc. 
We also seek \emph{lower} bounds on $\mathcal{B}(\mathcal{F}, N)$, which may be helpful for discerning the true size of $\mathcal{B}(\mathcal{F}, N)$.  By general principles of bilinear forms (see \cite[Chapter 7.3]{IK}), it is well-known that $\mathcal{B}(\mathcal{F}, N) \gg (|\mathcal{F}| + N)^{1-\varepsilon}$.
\begin{mydefi}
 The \emph{optimistic bound} (for $\mathcal{F}$) is the purported inequality
 \begin{equation}
  \mathcal{B}(\mathcal{F}, N) \ll (|\mathcal{F}| + N)^{1+\varepsilon},
 \end{equation}
 valid for all $N$.
\end{mydefi}
In some important cases, notably for families of Dirichlet characters, the optimistic bound is known to be true (e.g., see Theorem \ref{thm:Gallagher} and \eqref{eq:quadraticlargesieve}  below).  For some other families, the optimistic bound does not hold.  See Section \ref{section:lowerbound} below for more discussion on known counterexamples.

It appears to be difficult to even conjecture the true size of $\mathcal{B}(\mathcal{F}, N)$ for general families.

The optimistic bound is true for the following  
$GL_2$ family.  Let $u_j$ run over the Hecke-Maass cusp forms on $SL_2(\mathbb{Z})$, of Laplace eigenvalue $1/4 + t_j^2$.  Let $\lambda_j(n)$ denote the $n$-th Hecke eigenvalue of $u_j$, and set $w_j = \text{Res}_{s=1} L(s, u_j \otimes u_j)$.  We let $E(z, 1/2+it)$ denote the Eisenstein series having Hecke eigenvalues $\tau_{it}(n) = \sum_{ab=n} (a/b)^{it}$. 
The spectral large sieve inequality (see \cite{IwaniecSpectralLargeSieve, Jutila}) states
\begin{equation}
\label{eq:largesieveMaass}
\max_{|{\bf a}| = 1}
\sum_{T \leq t_j \leq T + \Delta} w_j^{-1}
\Big|
\sum_{N \leq n \leq 2N}
a_n \lambda_j(n) 
\Big|^2
\ll (\Delta T + N)(NT)^{\varepsilon},
\end{equation}
for $1 \leq \Delta \leq T$.  One may additionally incorporate a contribution from the Eisenstein series, without altering the bound on the right hand side of \eqref{eq:largesieveMaass}.

The present state of knowledge of large sieve inequalities for $GL_3$ families is much weaker than for $GL_1$ and $GL_2$.  Some references in this direction include \cite{DukeKowalski, Blomer, YoungGL3Kuz, BlomerButtcane, YoungGL3Spectral}.
The symmetric-squares of $SL_2(\mz)$ Maass forms are known to correspond to self-dual $SL_3(\mz)$ Maass forms \cite{GelbartJacquet, Soudry}, and form a particularly interesting $GL_3$ family.  The relation between the Dirichlet series coefficients of $L(u_j, s)$ 
and $L(\mathrm{sym}^2 u_j, s)$
is encapsulated in the identity
\begin{equation}
\label{eq:sym2DirichletSeriesDef}
\lambda_{\mathrm{sym}^2 u_j}(n) = \sum_{d^2 m = n} \lambda_j(m^2)
\end{equation}

The main objective of this paper is a bound on $\mathcal{M}_0(\Delta, T, N)$ defined by
\begin{equation}
\mathcal{M}_0(\Delta, T,N) = 
\max_{|{\bf a}| = 1}
\sum_{T \leq t_j \leq T + \Delta} w_j^{-1}
\Big|
\sum_{N \leq n \leq 2N}
a_n \lambda_j(n^2) 
\Big|^2.
\end{equation}
We also define the analogous contribution from the Eisenstein series, namely
\begin{equation}
\label{eq:Mcdef}
\mathcal{M}_{\infty}(\Delta, T, N) = 
\max_{|{\bf a}| = 1}
\int_{T \leq t \leq T + \Delta} w_t^{-1}
\Big|
\sum_{N \leq n \leq 2N}
a_n \tau_{it}(n^2) 
\Big|^2,
\end{equation}
where $w_t = |\zeta(1+2it)|^2$.  Let $\mathcal{M}(\Delta, T, N) = \mathcal{M}_0(\Delta, T, N) + \mathcal{M}_{\infty}(\Delta, T, N)$.
It follows from \eqref{eq:largesieveMaass} (after inclusion of the Eisenstein series)  that
\begin{equation}
\label{eq:Sym2TrivialBound}
\mathcal{M}(\Delta, T, N) \ll (\Delta T + N^2)(NT)^{\varepsilon},
\end{equation}
which matches the optimistic bound only for small values of $N$.  We regard \eqref{eq:Sym2TrivialBound} as a benchmark which does not use the arithmetical structure of the inner sum, namely that the Hecke eigenvalues are sampled at squares.

In a slightly different context, Duke and Kowalski \cite{DukeKowalski} showed a bound that is strongest for large values of $N$.  Their proof proceeds by duality and uses analytic properties of the degree $9$ Rankin-Selberg $L$-function $L(\mathrm{sym}^2 u_j \otimes \mathrm{sym}^2 u_k, s)$.  More precisely, they use a contour-shifting argument and apply the convexity bound for these Rankin-Selberg $L$-functions.  Strictly speaking, they study the level aspect of the problem and not the archimedean aspect which is the focus here.  Nevertheless, their method leads to
\begin{equation}
\label{eq:DukeKowalskiBound}
\mathcal{M}_{0}(\Delta, T, N) \ll (N + \Delta^{3/2} T^{5/2} N^{1/2}) (NT)^{\varepsilon}.
\end{equation}
This sets another useful frame of reference to complement \eqref{eq:Sym2TrivialBound}.
Junehyuk Jung and Min Lee (personal communication) have recently improved on \eqref{eq:DukeKowalskiBound}, using the dual approach.

We also mention that \cite[Problem 7.29]{IK} asks for an improvement on \eqref{eq:DukeKowalskiBound} (though technically 
they state the problem for the level aspect).

\subsection{Statements of results}
The main result of this paper is an improvement on \eqref{eq:Sym2TrivialBound} and \eqref{eq:DukeKowalskiBound} in many ranges.
\begin{mytheo}
\label{thm:mainthm}
Let notation be as above.  Then
\begin{equation}
\label{eq:mainthm}
\mathcal{M}(\Delta, T, N)
\ll (NT)^{\varepsilon}
\begin{cases}
\Delta T + T^{1/2} N, \qquad &N \leq T, \\
\Delta N + N^{3/2}, \qquad &T \leq N \leq T^2, \\
\frac{N^2}{T}, \qquad & T^2 \leq N.
\end{cases}
\end{equation}
\end{mytheo}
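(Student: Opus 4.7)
The natural strategy is to apply the Kuznetsov trace formula to the smoothed spectral sum, then bound the resulting sum of Kloosterman sums $S(n^2,m^2;c)$. First, I would introduce a nonnegative smooth weight $h(t)\ge 0$ that majorizes $\mathbf{1}_{[T,T+\Delta]}$ (with rapid decay elsewhere) together with its Eisenstein analogue. By positivity, the quantity $\mathcal{M}(\Delta,T,N)$ is bounded by
\begin{equation*}
\max_{|\mathbf{a}|=1}\sum_{n,m\sim N}a_n\overline{a_m}\Bigl(\sum_j\frac{h(t_j)}{w_j}\lambda_j(n^2)\lambda_j(m^2)+\text{Eis}\Bigr).
\end{equation*}
Kuznetsov's formula converts the inner spectral expression into a diagonal $\delta_{n=m}\cdot\mathcal{H}$, with $\mathcal{H}\asymp\Delta T$, plus an off-diagonal sum
\begin{equation*}
\sum_{c\ge 1}\frac{S(n^2,m^2;c)}{c}\,\widetilde{h}\!\left(\frac{4\pi nm}{c}\right),
\end{equation*}
where $\widetilde h$ is the standard Bessel transform. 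The diagonal contributes $\Delta T\sum_n|a_n|^2=\Delta T$, which accounts for the first term when $N\le T$ and is absorbed into the other terms in the remaining ranges.

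The main work lies in the off-diagonal. The Bessel transform $\widetilde h(x)$ is essentially supported on $x\gtrsim T^{1-\varepsilon}$, which restricts $c$ to $c\lesssim nm/T\le N^2/T$. The crucial arithmetic input is that the entries of $S(n^2,m^2;c)$ are both perfect squares: by CRT and Sali\'e-style evaluation at odd prime power moduli, $S(n^2,m^2;c)$ should unfold into a weighted combination of the much simpler sums $S(\pm nm,0;c)$ modulated by a quadratic character. This reduction is where the symmetric-square structure of the family gets used, and it is what buys extra cancellation in the $(n,m)$-sum beyond what Weil's bound $|S(n^2,m^2;c)|\ll (n,m,c)c^{1/2+\varepsilon}$ alone provides. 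After this reduction, I would apply Cauchy--Schwarz in $(n,m)$ to detach the coefficients $a_n,a_m$, reducing to a fourth-moment-type quantity amenable to a second application of Kuznetsov (in the reverse, additive direction) or, at larger $c$, to Poisson summation.

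The different cases of \eqref{eq:mainthm} should then emerge from dyadic analysis of the range $c\lesssim N^2/T$. For $N\le T$ the Kloosterman modulus is short, and combining Sali\'e-type cancellation with Weil on the resulting sums $S(\pm nm,0;c)$ produces the $T^{1/2}N$ term. For $T\le N\le T^2$ the modulus is intermediate, split dyadically: the Weil contribution with the volume of $c$ yields $N^{3/2}$, while reorganizing via Poisson in $c$ produces a $\Delta N$ term. For $N\ge T^2$ the Bessel transform is essentially unrestricted on $c\le N^2/T$, and Poisson in $c$ (or in $n,m$) collapses the sum to the trivial size $N^2/T$. The main obstacle I expect is the middle range: getting the correct power of $T$ in the $N^{3/2}$ term demands an optimally sharp Sali\'e-type evaluation of $S(n^2,m^2;c)$ together with a careful treatment of $\widetilde h$ on the transition between its decaying and oscillatory regimes, so that one does not lose powers of $T^{1/2}$ in the dyadic bookkeeping.
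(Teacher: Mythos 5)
Your opening moves (positive majorant, Kuznetsov, diagonal $\asymp\Delta T$, truncation of $c$ via the support of the Bessel transform) match the paper, but the step you call the ``crucial arithmetic input'' does not work as stated, and it is where the actual proof does something quite different. The plain Kloosterman sum $S(n^2,m^2;c)$ does \emph{not} unfold into Ramanujan sums $S(\pm nm,0;c)$ twisted by a quadratic character: Sali\'e-type closed-form evaluations apply to the \emph{twisted} sums $S_\chi(a,b;c)$ (or to $S(a,b;p^k)$ with $k\ge 2$ even, e.g.\ $S(m^2,n^2;p^2)=p\sum_\pm e_p(\pm 2mn)$), whereas for prime modulus $S(n^2,m^2;p)=S(1,(nm)^2;p)$ is a genuine Kloosterman sum with no elementary evaluation. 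So the reduction on which your whole off-diagonal analysis rests fails for the bulk of the moduli, and the subsequent Cauchy--Schwarz plus ``reverse Kuznetsov'' plan has nothing concrete to act on.

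What the paper does instead is: (i) apply the Selberg identity to reduce $S(m^2,n^2;c)$ to $S(w^2,1;c)$ with $w=mn/d$; (ii) observe that the leading phase $e_c(\pm 2mn)$ extracted from the Bessel asymptotic is periodic mod $c$, so that $F(w)=S(w^2,1;c)e_c(2w)$ is a function of $w$ mod $c$ which can be expanded in Dirichlet characters $\chi$ mod $c$; (iii) compute the Fourier coefficients $\widehat F(\chi)$ explicitly --- they are $O(c^\varepsilon)$ for $\chi$ primitive, and the only large contribution ($c^{1/2}$) comes from the trivial character on square moduli; (iv) separate $m,n$ archimedeanly by Mellin transform and apply the hybrid large sieve of Gallagher to the resulting sums $\sum_{m,n}a_m\overline{a_n}\chi(mn)(mn)^{it}$; and (v) obtain $\Delta T+\Delta N+N^2/\Delta$ and then \emph{optimize by enlarging $\Delta$}, which is how the three cases of \eqref{eq:mainthm} arise (not from a dyadic analysis in $c$ as you suggest). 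The character decomposition in (ii)--(iii) and the final $\Delta$-enlargement are the two essential ideas missing from your proposal; without a correct substitute for (ii)--(iii), Weil's bound alone only recovers something like \eqref{eq:Sym2TrivialBound}, and the claimed exponents in the middle range are out of reach.
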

One may check that Theorem \ref{thm:mainthm} improves on (or agrees with) \eqref{eq:Sym2TrivialBound} for all $N$ and $T$.  Likewise, one may check that Theorem \ref{thm:mainthm} improves on \eqref{eq:DukeKowalskiBound} for $N \ll \Delta T^{7/3}$.  Another way to gauge Theorem \ref{thm:mainthm} is to ask in what ranges of parameters does it give the optimistic bound;  this occurs for $N \ll \min(T, \Delta T^{1/2})$, while \eqref{eq:Sym2TrivialBound} only matches the optimistic bound for $N \ll \Delta^{1/2} T^{1/2}$.
On the other hand, Theorem \ref{thm:lowerboundEisPart} below gives a \emph{lower} bound on $\mathcal{M}(\Delta, T, N)$ of size $\Delta N$, and Theorem \ref{thm:mainthm} appears even more favorable in this light.

As an aside, \eqref{eq:mainthm} is concisely expressed by
\begin{equation}
\label{eq:mainthmAlt}
\mathcal{M}(\Delta, T, N) 
\ll (NT)^{\varepsilon}
\Big( \Delta(T+N) + N (T+N)^{1/2} + \frac{N^2}{T} \Big).
\end{equation}

As a simple corollary, we record 
\begin{mycoro}
\label{coro:secondmoment}
For $T^{1/2} \leq \Delta \leq T$ we have
\begin{equation}
\sum_{T \leq t_j \leq T+\Delta} |L(\mathrm{sym}^2 u_j, 1/2)|^2 \ll \Delta T^{1+\varepsilon}.
\end{equation}
\end{mycoro}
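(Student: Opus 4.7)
The plan is to apply an approximate functional equation for $L(\mathrm{sym}^2 u_j, 1/2)$, expand the symmetric-square coefficients via \eqref{eq:sym2DirichletSeriesDef}, and then feed each of the resulting inner sums into Theorem \ref{thm:mainthm}. The crucial observation is that $L(\mathrm{sym}^2 u_j, s)$ has analytic conductor $\asymp t_j^2$ at $s = 1/2$, since its three archimedean gamma factors have local parameters in $\{0 \text{ or } 1,\; 2it_j,\; -2it_j\}$ and only two of them grow with $t_j$. Hence the approximate functional equation represents $L(\mathrm{sym}^2 u_j,1/2)$ as a smoothed sum over $n \leq T^{1+\varepsilon}$, not $n \leq T^{3/2+\varepsilon}$; this is precisely what allows $\Delta T$ rather than $\Delta T^{3/2}$ on the right-hand side.

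More concretely, after writing
\[
L(\mathrm{sym}^2 u_j, 1/2) = \sum_n \frac{\lambda_{\mathrm{sym}^2 u_j}(n)}{\sqrt{n}}\, V_j(n/T) + (\text{dual}) + O(T^{-A})
\]
and using \eqref{eq:sym2DirichletSeriesDef} to group the sum by $d$, one obtains $L \ll \bigl|\sum_{d \leq T^{1/2+\varepsilon}} d^{-1} L_d(j)\bigr|$, where
\[
L_d(j) := \sum_m \frac{\lambda_j(m^2)}{\sqrt{m}}\, V_j(d^2 m/T)
\]
is supported on $m \leq T/d^2$. A weighted Cauchy--Schwarz in $d$ gives $|L|^2 \ll T^\varepsilon \sum_d d^{-1+\varepsilon} |L_d(j)|^2$. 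For each $d$, the coefficient sequence of $L_d(j)$ has $\ell^2$-norm $\ll (\log T)^{1/2}$, and the effective length $M_d := T/d^2$ is always $\leq T$, hence always in the first regime of Theorem \ref{thm:mainthm}, yielding $\sum_j w_j^{-1} |L_d(j)|^2 \ll T^\varepsilon(\Delta T + T^{3/2}/d^2)$. The hypothesis $\Delta \geq T^{1/2}$ is exactly what ensures $\Delta T \geq T^{3/2} \geq T^{3/2}/d^2$, so the first term dominates; summing over $d$ (with $\sum_d d^{-1+\varepsilon} \ll T^\varepsilon$) then gives $\sum_j w_j^{-1} |L(\mathrm{sym}^2 u_j,1/2)|^2 \ll \Delta T^{1+\varepsilon}$. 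The weight $w_j^{-1}$ is removed at the end via the Hoffstein--Lockhart bound $w_j = L(\mathrm{sym}^2 u_j, 1) \asymp T^{\pm\varepsilon}$.

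The main technical obstacle is the $t_j$-dependence of the smooth cutoff $V_j$, since Theorem \ref{thm:mainthm} requires the coefficients to be independent of $j$. I would handle this by the standard Mellin device: write $V_j(y) = \frac{1}{2\pi i}\int_{(0)} \widetilde V_j(s)\, y^{-s}\, ds$ and observe that $\widetilde V_j(iu)$ has Schwartz decay for $|u| \gg T^\varepsilon$ by Stirling applied to the ratios of $\Gamma$-factors (the $t_j$-dependence on the critical line reduces to an oscillating phase $t_j^{iu}$ of modulus one). Truncating to $|u| \leq T^\varepsilon$ and Cauchy--Schwarzing on this short integral reduces the estimation of $\sum_j w_j^{-1} |L_d(j)|^2$ to a uniform-in-$u$ application of Theorem \ref{thm:mainthm} with the $j$-independent coefficients $V(d^2 m/T)\, m^{-1/2 - iu}$, at the acceptable cost of $T^\varepsilon$.
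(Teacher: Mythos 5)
Your argument is correct and is essentially the deduction the paper intends (the corollary is recorded there without a written proof): an approximate functional equation of length $T^{1+\varepsilon}$ coming from the analytic conductor $\asymp t_j^2$, the $d$-decomposition of \eqref{eq:sym2DirichletSeriesDef} (which is exactly the mechanism behind Corollary \ref{coro:mainthmSym2}), and Theorem \ref{thm:mainthm} in the regime $N \leq T$, where the hypothesis $\Delta \geq T^{1/2}$ lets $\Delta T$ absorb the $T^{1/2}N \leq T^{3/2}$ term. The Mellin separation of the $t_j$-dependent cutoff $V_j$ and the removal of $w_j$ via Hoffstein--Lockhart are the standard finishing touches and are handled correctly.
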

Corollary \ref{coro:secondmoment} is not close to the state of the art, as Lam \cite{Lam} previously obtained this quality of bound with $\Delta = T^{1/3}$, which  was recently improved further to $\Delta = T^{1/5}$ in \cite{KhanYoung}.  On the other hand, \eqref{eq:Sym2TrivialBound} gives 
a Lindel\"{o}f-on-average bound only for $\Delta = T$.  In addition, it is likely that some of the intermediate steps used in the proof of Theorem \ref{thm:mainthm} could be useful for some applications.  For instance, \eqref{eq:Kformula1} or \eqref{eq:Kformula2}, which are formulas valid for arbitrary coefficients $a_n$, could be followed up with tools that use special properties of the coefficients, such as a Poisson/Voronoi summation formula.
This strategy was used in \cite{YoungGL3GL2specialPoints}.

Theorem \ref{thm:mainthm} treats the Hecke eigenvalues sampled at squares, which is not the same as the Dirichlet series coefficients of the symmetric-square $L$-function (recall \eqref{eq:sym2DirichletSeriesDef}).  We next discuss the connections between these objects.
Define
\begin{equation}
 \mathcal{M}_0^{(2)}(\Delta, T,N) = 
\max_{|{\bf a}| = 1}
\sum_{T \leq t_j \leq T + \Delta} w_j^{-1}
\Big|
\sum_{N \leq n \leq 2N}
a_n \lambda_{\mathrm{sym}^2 u_j}(n) 
\Big|^2.
\end{equation}
Suppose for convenience that $a_n$ is supported on $[N, 2N]$.  Then note
\begin{equation}
\sum_{ n  } a_n \lambda_{\mathrm{sym}^2 u_j}(n)
= \sum_{d} b_d \lambda_j(d^2),
\qquad
\text{where}
\qquad
b_d = \sum_{k} a_{dk^2}.
\end{equation}
Note that if $a_n$ is restricted to square-free integers, then $\lambda_{\mathrm{sym}^2 u_j}(n) = \lambda_j(n^2)$, so a bound on $\mathcal{M}_{0}(\Delta, T, N)$ may be applied under this assumption.
Without this square-free restriction, we may deduce the following.
\begin{mycoro}
\label{coro:mainthmSym2}
 For any ${\bf a}$, we have
 \begin{equation}
 \label{eq:mainthmSym2}
  \sum_{T \leq t_j \leq T + \Delta} w_j^{-1}
\Big|
\sum_{N \leq n \leq 2N}
a_n \lambda_{\mathrm{sym}^2 u_j}(n) 
\Big|^2
\ll
\mathcal{D} 
+
\Big(\Delta N + N (T+N)^{1/2} + \frac{N^2}{T}\Big) (NT)^{\varepsilon}
|{\bf a}|^2,
 \end{equation}
 where
 \begin{equation}
 \mathcal{D} = \Delta T \sum_{n} \Big| \sum_k a_{n k^2}\Big|^2
 \ll \Delta T N^{1/2} |{\bf a}|^2.
 \end{equation}
 In particular,
  \begin{equation}
 \label{eq:M0sym2bound}
  \mathcal{M}_0^{(2)}(\Delta, T, N) 
\ll (NT)^{\varepsilon}
\Big( \Delta T N^{1/2} + \Delta N + N (T+N)^{1/2} + \frac{N^2}{T} \Big).
 \end{equation}
\end{mycoro}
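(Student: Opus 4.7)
The plan is to reduce Corollary~\ref{coro:mainthmSym2} to Theorem~\ref{thm:mainthm} via the identity \eqref{eq:sym2DirichletSeriesDef}, a dyadic decomposition, and a Cauchy--Schwarz estimate on the rearranged coefficients. As noted in the excerpt, for $a_n$ supported in $[N,2N]$ one has $\sum_n a_n \lambda_{\mathrm{sym}^2 u_j}(n) = \sum_m b_m \lambda_j(m^2)$ with $b_m = \sum_k a_{mk^2}$ supported in $1 \leq m \leq 2N$.

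After dyadically decomposing into ranges $m \asymp M \leq 2N$ (at the cost of a $\log N$ factor from Cauchy--Schwarz in the outer sum), Theorem~\ref{thm:mainthm} in the form \eqref{eq:mainthmAlt} gives, with $\|b\|_M^2 = \sum_{m \asymp M} |b_m|^2$,
\[
\sum_{T \leq t_j \leq T+\Delta} w_j^{-1} \Big|\sum_{m \asymp M} b_m \lambda_j(m^2)\Big|^2 \ll (NT)^\varepsilon \bigl(\Delta T + \Delta M + M(T+M)^{1/2} + M^2/T\bigr) \|b\|_M^2,
\]
where I have split $\Delta(T+M) = \Delta T + \Delta M$. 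The $\Delta T \|b\|_M^2$ piece, summed over dyadic $M$, reconstructs $\Delta T \|b\|^2 = \mathcal{D}$ exactly, which is precisely why it should be peeled off before estimating $\|b\|_M^2$ in terms of $|{\bf a}|^2$.

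For the three remaining terms I would bound $\|b\|_M^2$ via Cauchy--Schwarz inside the definition of $b_m$: since there are $\ll (N/m)^{1/2}$ values of $k$ with $mk^2 \in [N,2N]$, one gets $|b_m|^2 \ll (N/M)^{1/2}\sum_k |a_{mk^2}|^2$ for $m \asymp M$; summing in $m$ and using the trivial divisor bound $|\{(m,k) : mk^2 = n\}| \ll n^\varepsilon$ to switch summation yields $\|b\|_M^2 \ll N^\varepsilon (N/M)^{1/2} |{\bf a}|^2$. Multiplying through produces $\Delta\sqrt{MN} + \sqrt{MN}(T+M)^{1/2} + M^{3/2}N^{1/2}/T$, each term bounded by $\Delta N + N(T+N)^{1/2} + N^2/T$ respectively because $M \leq 2N$; summing over the $O(\log N)$ dyadic scales is absorbed into $N^\varepsilon$. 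This establishes \eqref{eq:mainthmSym2}.

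For the uniform estimate $\mathcal{D} \ll \Delta T N^{1/2} |{\bf a}|^2$ needed in \eqref{eq:M0sym2bound}, I would apply the analogous Cauchy--Schwarz argument directly to the full $\|b\|^2$: after reparametrizing by $n = mk^2$, the weight $(N/m)^{1/2}$ becomes $\asymp k$, and the elementary bound $\sum_{k^2 \mid n} k \ll n^{1/2+\varepsilon}$ yields the claim. There is no substantive obstacle beyond bookkeeping; the essential point is retaining $\Delta T \|b\|_M^2$ in the sharper form $\mathcal{D}$ rather than collapsing it into the $|{\bf a}|^2$ estimate at the outset, which would inflate each of the three off-diagonal terms by a spurious $N^{1/2}$.
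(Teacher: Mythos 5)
Your proposal is correct and is essentially the paper's intended argument: the paper only records the identity $\sum_n a_n\lambda_{\mathrm{sym}^2 u_j}(n)=\sum_d b_d\lambda_j(d^2)$ with $b_d=\sum_k a_{dk^2}$ and leaves to the reader exactly the steps you supply, namely a dyadic application of Theorem \ref{thm:mainthm} in the form \eqref{eq:mainthmAlt}, peeling off the diagonal $\Delta T\|b\|^2=\mathcal{D}$, and the Cauchy--Schwarz/divisor bound $\|b\|_M^2\ll N^{\varepsilon}(N/M)^{1/2}|{\bf a}|^2$ for the remaining terms. The only cosmetic caveat is that the Cauchy--Schwarz over the $O(\log N)$ dyadic scales attaches a factor $(NT)^{\varepsilon}$ to the diagonal as well, so one recovers $\mathcal{D}$ only up to such a factor (likewise $\mathcal{D}\ll\Delta T N^{1/2+\varepsilon}|{\bf a}|^2$), which is harmless for \eqref{eq:M0sym2bound}.
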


Thorner and Zaman \cite{ThornerZaman} have proved a complementary bound on $\mathcal{M}_0^{(2)}$ by the dual approach.  By following their method, they implicitly show 
\begin{equation}
\mathcal{M}_0^{(2)}(\Delta, T, N) 
\ll (NT)^{\varepsilon} (N + \Delta^2 T^4).
\end{equation}



The shape of $\mathcal{D}$ in \eqref{eq:mainthmSym2} is reminiscent of the family of quadratic Dirichlet characters, as we now elaborate.  Heath-Brown \cite{HB} showed
\begin{equation}
\label{eq:quadraticlargesieve}
\sumstar_{d \leq X}
\Big| \sumstar_{n \leq N} a_n \chi_d(n) \Big|^2
\ll (X+N)^{1+\varepsilon} |{\bf a}|^2,
\end{equation}
where the sums restrict $d,n$ to odd, square-free integers.  The square-free restriction is vital:
if $d$ is allowed to run over squares, then one can produce a large term of size $\sqrt{X} N |{\bf a}|^2$ by taking $a_n = 1$ for all $n$.  A term of this size would contradict \eqref{eq:quadraticlargesieve}.  By the duality principle, one may produce a large term of size $\sqrt{N} X |{\bf a}|^2$ if $n$ is not restricted to square-free integers (see also discussion surrounding \eqref{eq:BiasedBound} below).  As a heuristic, one might expect
\begin{equation*}
\frac{1}{X^*} \sumstar_{d \leq X} \chi_d(m n) \approx  \delta_{mn=\square}, 
\qquad 
\text{where}
\qquad
X^* = \sumstar_{d \leq X} 1,
\end{equation*}
and consequently,
\begin{equation*}
\sumstar_{d \leq X} \Big|\sum_{n \leq N} a_n \chi_d(n) \Big|^2
\approx X^* \sum_n \Big| \sum_k a_{k^2 n} \Big|^2.
\end{equation*}
The point is that this shape of the ``diagonal" term matches that in Corollary \ref{coro:mainthmSym2}.  See Proposition \ref{prop:sym2lowerbound} below for more discussion.

\subsection{Lower bounds}
It is unclear what to conjecture for the true size of $\mathcal{M}(\Delta, T, N)$.  In other contexts, notably the family of cusp forms on $\Gamma_1(q)$ studied by Iwaniec and Li \cite{IwaniecLi}, the optimistic bound is not true.  Blomer and Buttcane \cite{BlomerButtcane} showed that for spectral families on $SL_n(\mathbb{Z})$, with $n \geq 3$, the Eisenstein series component contributes a large term, implying that the optimistic bound is not true for these families.  The recent work \cite{YoungGL3Spectral} showed that, at least for $n=3$, the Blomer-Buttcane bound can be improved by restricting to the cusp forms (that is, omitting the Eisenstein series).  This raises the question on what are the sizes of $\mathcal{M}_0(\Delta, T, N)$ and $\mathcal{M}_{\infty}(\Delta, T, N)$ separately. 
Towards this end, we have
\begin{mytheo}
\label{thm:lowerboundEisPart}
If $\Delta \gg (NT)^{\delta}$ for some $\delta >0$, then
\begin{equation}
\mathcal{M}_{\infty}(\Delta, T, N) \gg \Delta N^{} (NT)^{-\varepsilon}.
\end{equation}
\end{mytheo}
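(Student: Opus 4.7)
The plan is to exhibit a near-optimal test sequence ${\bf a}$ and reduce the lower bound on $\mathcal{M}_\infty$ to a first moment estimate via Cauchy--Schwarz. Fix a smooth nonnegative bump $\phi$ supported in $[1,2]$ and set $a_n = \phi(n/N)/\|\phi(\cdot/N)\|_{\ell^2}$, so that $|{\bf a}| = 1$. Fix also a smooth nonnegative window $\eta \leq 1$ supported on $[T, T+\Delta]$ with $\int \eta \asymp \Delta$. Writing $S(t) = \sum_n a_n \tau_{it}(n^2)$, the positivity of $w_t^{-1}|S|^2$ and Cauchy--Schwarz give
$$\int_T^{T+\Delta} w_t^{-1}|S(t)|^2\,dt \;\geq\; \int \eta(t)\, w_t^{-1}|S(t)|^2\,dt \;\geq\; \frac{\bigl|\int \eta(t) S(t)\,dt\bigr|^2}{\int \eta(t)\, w_t\,dt}.$$
The denominator is $O(\Delta (\log T)^2)$ by the classical bound $|\zeta(1+2it)| \ll \log T$, so it remains to prove $\bigl|\int \eta(t) S(t)\,dt\bigr| \gg \sqrt{N}\,\Delta$.

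The key arithmetic input is the identity
$$\tau_{it}(n^2) \;=\; \sum_{\substack{(x,y) = 1 \\ xy \mid n}} (x/y)^{2it},$$
obtained by parameterizing divisors $a$ of $n^2$ as $a = nx/y$ with $(x,y) = 1$ and $xy \mid n$ (so that $a/b = (x/y)^2$). Interchanging sums gives
$$\int \eta(t) S(t)\,dt \;=\; \sum_{\substack{(x,y) = 1 \\ xy \leq 2N}} b_{x,y}\, \widetilde\eta(x/y), \qquad b_{x,y} := \sum_{xy \mid n} a_n, \qquad \widetilde\eta(r) := \int \eta(t)\, r^{2it}\,dt.$$
A short computation yields $b_{x,y} \asymp \sqrt{N}/(xy)$ for $xy \leq N$; in particular the $(x,y) = (1,1)$ term contributes the positive real quantity $b_{1,1}\widetilde\eta(1) = b_{1,1}\int \eta \asymp \sqrt{N}\,\Delta$.

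The main obstacle is controlling the remaining pairs $(x,y) \neq (1,1)$. Since $\eta$ is smooth, repeated integration by parts gives rapid decay of $\widetilde\eta(r)$ outside the narrow regime $|\log r| \ll 1/\Delta$, where one still has $|\widetilde\eta(r)| \leq \int \eta \asymp \Delta$. For $(x,y) = 1$ with $x \neq y$, the separation estimate $|\log(x/y)| \geq 1/\max(x,y)$ forces $\max(x,y) \gg \Delta$ within this window. A dyadic count over $y \geq \Delta$ with $x$ coprime to $y$ and $|x - y| \leq Cy/\Delta$ yields
$$\sum_{\substack{(x,y) = 1,\; (x,y) \neq (1,1) \\ xy \leq 2N,\; |\log(x/y)| \leq C/\Delta}} \frac{1}{xy} \;\ll\; \frac{\log N}{\Delta},$$
so the total off-diagonal contribution is $\ll \sqrt{N}\,\Delta \cdot (\log N)/\Delta = \sqrt{N}\log N$. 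Under the hypothesis $\Delta \gg (NT)^\delta$, this is strictly smaller than the main term $\sqrt{N}\,\Delta$, giving $\bigl|\int \eta(t) S(t)\,dt\bigr|^2 \gg N\Delta^2$, and hence $\mathcal{M}_\infty(\Delta, T, N) \gg N\Delta/(\log T)^{O(1)} \gg N\Delta(NT)^{-\varepsilon}$.
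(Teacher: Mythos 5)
Your proof is correct, but it takes a genuinely different route from the paper's. The paper tests against the primes: it takes $a_p=1$ for $p\in[N,2N]$ prime, writes $\tau_{it}(p^2)=1+p^{2it}+p^{-2it}$, and expands the square directly, so the main term $\Delta\,(\sum_p 1)^2$ comes from the constant term, while the oscillating Dirichlet polynomial $B(t)=\sum_p(p^{2it}+p^{-2it})$ and the cross term are controlled in mean square by the mean value theorem for Dirichlet polynomials (Theorem \ref{thm:Gallagher} with $Q=1$) and Cauchy--Schwarz. You instead take smooth nonnegative coefficients on all of $[N,2N]$, reduce to a \emph{first} moment via Cauchy--Schwarz against the weight $w_t$, use the divisor identity $\tau_{it}(n^2)=\sum_{(x,y)=1,\,xy\mid n}(x/y)^{2it}$ (which is correct), and isolate the $(x,y)=(1,1)$ term, bounding the rest by the decay of $\widetilde\eta$ combined with an elementary spacing/dyadic count of coprime pairs with $x/y$ near $1$. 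Both arguments exploit exactly the same bias (the constant term hidden in $\tau_{it}(n^2)$) and both need only $\Delta\gg\log N$, comfortably within the hypothesis; the paper's version is shorter because Gallagher's theorem is already at hand and the prime support makes the bias transparent, whereas yours is more elementary and self-contained (no large sieve/mean-value input, no prime number theorem) and shows the bias is already visible at the level of the first moment. One presentational point, not a gap: the pairs with $|\log(x/y)|>C/\Delta$ are not literally negligible, since $\Delta|\log(x/y)|$ can be bounded; one should use $|\widetilde\eta(x/y)|\ll_j \Delta\,(\Delta|\log(x/y)|)^{-j}$ and sum over dyadic ranges $\Delta|\log(x/y)|\asymp 2^k$, where the same count gives a weighted total $\ll 2^k\log N/\Delta$, so the factor $2^{-jk}$ keeps the whole off-diagonal contribution $\ll \sqrt{N}\log N$ as you claim.
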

Since the method of proof of Theorem \ref{thm:mainthm} treats $\mathcal{M}_0 + \mathcal{M}_{\infty}$, Theorem \ref{thm:lowerboundEisPart} shows that we cannot remove the term $\Delta N$ in \eqref{eq:mainthmAlt} (also seen heuristically in \eqref{eq:sketchpartchiTrivialCNterm} below).  However, since this lower bound comes from $\mathcal{M}_{\infty}$, this has no direct implication about $\mathcal{M}_{0}$ itself.
A short proof of Theorem \ref{thm:lowerboundEisPart} appears in Section \ref{section:lowerbound}.

Next we discuss the true size of $\mathcal{M}_0^{(2)}$.
On first inspection, \eqref{eq:M0sym2bound} appears to be defective in that the term $\Delta T N^{1/2}$ is much larger than the size of the family.  However, we have the following lower bound showing this term cannot be improved:
\begin{myprop}
\label{prop:sym2lowerbound}
 If $N \ll \Delta T^{1-\delta}$ for some $\delta > 0$, then
 \begin{equation}
   \mathcal{M}_0^{(2)}(\Delta, T, N) \gg  \Delta T N^{1/2} (NT)^{-\varepsilon}.
 \end{equation}
\end{myprop}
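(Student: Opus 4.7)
The plan is to exhibit an explicit sequence $(a_n)$ which realizes the claimed lower bound, mirroring the quadratic-character construction discussed around~\eqref{eq:quadraticlargesieve}. I take $a_n = 1$ when $n = k^2$ is a perfect square in $[N,2N]$ and $a_n = 0$ otherwise, giving $|{\bf a}|^2 \asymp N^{1/2}$. Using~\eqref{eq:sym2DirichletSeriesDef}, this yields
\[
S_j := \sum_n a_n \lambda_{\mathrm{sym}^2 u_j}(n) = \sum_{\sqrt{N} \leq k \leq \sqrt{2N}}\ \sum_{e \mid k} \lambda_j(e^4),
\]
which is real since the Hecke eigenvalues $\lambda_j(e^4)$ are real. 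With a non-negative smooth $h \leq \mathbf{1}_{[T,T+\Delta]}$ satisfying $\int h \gg \Delta$, Cauchy--Schwarz gives
\[
\sum_{T \leq t_j \leq T+\Delta} w_j^{-1} S_j^2 \;\geq\; \frac{\bigl(\sum_j w_j^{-1} h(t_j) S_j\bigr)^2}{\sum_j w_j^{-1} h(t_j)},
\]
where the denominator is $\asymp \Delta T$ by the weighted Weyl law. Since $\mathcal{M}_0^{(2)}(\Delta,T,N)\cdot |{\bf a}|^2$ upper-bounds the left-hand side, the desired lower bound $\mathcal{M}_0^{(2)}\gg \Delta T N^{1/2}(NT)^{-\varepsilon}$ will follow once one shows $\bigl|\sum_j w_j^{-1} h(t_j) S_j\bigr| \gg \sqrt{N}\,\Delta T\,(NT)^{-\varepsilon}$.

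To evaluate this numerator, swap orders of summation to obtain $\sum_{e \geq 1} A(e)\,\mathcal{T}(e)$, where $A(e) := \#\{k : e \mid k,\,\sqrt{N}\leq k\leq \sqrt{2N}\} \asymp \sqrt{N}/e$ and $\mathcal{T}(e) := \sum_j w_j^{-1} h(t_j)\lambda_j(e^4)$. The $e=1$ term alone contributes $A(1)\mathcal{T}(1) \asymp \sqrt{N}\,\Delta T$, which is the claimed size. What remains is to show that the $e \geq 2$ tail is negligible by a factor $(NT)^{\eta}$ for some $\eta > 0$. For this I apply the Kuznetsov trace formula to each $\mathcal{T}(e)$, writing it as $\sum_{c \geq 1} c^{-1} S(1,e^4;c)\phi_h(4\pi e^2/c)$ plus a small Eisenstein contribution, and then invoke Weil's bound on Kloosterman sums together with standard stationary-phase estimates on the Bessel transform $\phi_h$ --- which is negligible for $y \ll T^{1-\varepsilon}$ and of size $\ll \Delta T/\sqrt{y}$ for $y \gg T$. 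This yields $|\mathcal{T}(e)| \ll \Delta e^{1+\varepsilon}$ for $e \geq \sqrt{T}$ and essentially zero otherwise, and summing $A(e)|\mathcal{T}(e)| \ll \sqrt{N}\,\Delta e^{\varepsilon}$ over $\sqrt{T}\leq e \leq \sqrt{2N}$ gives a total off-diagonal of $\ll \Delta N^{1+\varepsilon/2}$.

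The ratio of this off-diagonal bound to the $e=1$ main term is $\ll N^{1/2+\varepsilon}/T$. Under the hypothesis $N \ll \Delta T^{1-\delta}$ (combined with $\Delta \leq T$), one has $N \leq T^{2-\delta}$, and hence the ratio is bounded by $T^{-\delta/2 + O(\varepsilon)}$, which is $\ll (NT)^{-\eta}$ for some $\eta > 0$ provided $\varepsilon$ is sufficiently small compared to $\delta$. The hypothesis $N \ll \Delta T^{1-\delta}$ is thus precisely the threshold beyond which this Cauchy--Schwarz-plus-Kuznetsov approach fails to place the off-diagonal safely below the main term. The main obstacle in executing the plan is the bookkeeping needed in the Bessel-transform bounds to confirm this threshold; everything else is a standard manipulation, and the argument is substantially simpler than the upper-bound proofs of this paper because only a linear spectral average $\mathcal{T}(e)$ needs to be controlled rather than a full bilinear form.
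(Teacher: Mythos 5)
Your construction is sound and reaches the stated bound, but it takes a genuinely different route from the paper. The paper tests against $a_n=\mathbf{1}_{n=p^2}$ rather than all squares, writes $\lambda_{\mathrm{sym}^2 u_j}(p^2)=1+\lambda_j(p^2)$, expands the square as $|A+B_j|^2$, and disposes of the fluctuating part by quoting the already-known \emph{upper} bound \eqref{eq:Sym2TrivialBound} for $\sum_j w_j^{-1}|B_j|^2$ (a bilinear form in $\lambda_j(p^2)$ of length $\sqrt{N}$) together with Cauchy--Schwarz for the cross term; no trace formula or Kloosterman estimates are needed, and the whole proof is a few lines. You instead reduce to a \emph{linear} spectral average via Cauchy--Schwarz in $j$ and then control $\mathcal{T}(e)$ for $e\geq 2$ by Kuznetsov plus Weil. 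Both proofs isolate the same source of bias (the constant term $\lambda_j(1)$, i.e.\ the ``$+1$'' in $\lambda_{\mathrm{sym}^2 u_j}(p^2)$), and your numerology checks out: the $e=1$ term gives $\sqrt{N}\,\Delta T$, the Kloosterman tail is $\ll \Delta N^{1+\varepsilon}$, the Eisenstein terms in Kuznetsov contribute $O(\Delta\sqrt{N}(NT)^{o(1)})$ after summing against $A(e)\ll\sqrt{N}/e$, and the hypothesis $N\ll\Delta T^{1-\delta}$ comfortably implies $N\ll T^{2-\delta}$. What your approach buys is independence from \eqref{eq:Sym2TrivialBound}; what it costs is the full Bessel-transform analysis (Lemma \ref{lemma:HinfinityProperties}), which you acknowledge but do not carry out.

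One point needs repair before your argument is complete: you cannot have $h$ simultaneously smooth, majorized by $\mathbf{1}_{[T,T+\Delta]}$, \emph{and} admissible for the Kuznetsov formula, since admissibility requires holomorphy in the strip $|\mathrm{Im}(t)|\leq\tfrac12+\delta$, which rules out compact support on the real line. The standard fix is to take a nonnegative admissible $h$ (e.g.\ the Gaussian of Lemma \ref{lemma:HinfinityProperties}) concentrated on $[T,T+\Delta]$, bound the contribution of $t_j$ outside $[T-\Delta\log T,\,T+\Delta\log T]$ to $\sum_j w_j^{-1}h(t_j)S_j$ directly using the rapid decay of $h$, and then absorb the resulting $\log T$ enlargement of the spectral window into the $(NT)^{-\varepsilon}$; this is routine but should be said.
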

Note that this lower bound does \emph{not} come from Eisenstein series, so this is a different phenomenon than that observed by Blomer and Buttcane \cite{BlomerButtcane}.  Nevertheless, both Theorems \ref{thm:lowerboundEisPart} and Proposition \ref{prop:sym2lowerbound} have similar proofs, as we next discuss.

Let $\mathcal{N} \subseteq [N,2N] \cap \mathbb{Z}$.  We say that the set $\mathcal{N}$ is \emph{biased} for the family $\mathcal{F}$ if
\begin{equation}
\sum_{f \in \mathcal{F}} \Big| \sum_{n \in \mathcal{N}} \lambda_f(n) \Big|^2 \gg
  (|\mathcal{F}| \cdot |\mathcal{N}|^2)^{1-\varepsilon}.
\end{equation}
By taking ${\bf a}$ to be the characteristic function of $\mathcal{N}$, we see that if $\mathcal{F}$ has a biased set $\mathcal{N}$, then
\begin{equation}
\label{eq:BiasedBound}
\mathcal{B}(\mathcal{F}, N) \gg (|\mathcal{F}| \cdot |\mathcal{N}|)^{1-\varepsilon},
\end{equation}
which contradicts the optimistic bound for $\mathcal{F}$ provided $|\mathcal{N}| \gg (|\mathcal{F}| \cdot N)^{\delta}$ for some $\delta>0$.  As a simple example of a biased set, let $\mathcal{F}$ consist of the quadratic Dirichlet characters of odd conductor $d$ with $d \asymp X$, and let $\mathcal{N}$ consist of the squares in  $[N, 2N]$, so $|\mathcal{N}| \asymp \sqrt{N}$.  It is easy to see that 
$\mathcal{N}$ is biased for this family.  Of course, this does not contradict \eqref{eq:quadraticlargesieve} since there $n$ is restricted to square-free integers.

Next we explain the source of bias in the two families in Theorems \ref{thm:lowerboundEisPart} and Proposition \ref{prop:sym2lowerbound}.  For the family in Theorem \ref{thm:lowerboundEisPart}, let $\mathcal{N}$ consist of primes in  $[N, 2N]$, so $|\mathcal{N}| \asymp \frac{N}{\log{N}}$.  The bias arises from $\tau_{it}(p^2) = 1 + p^{2it} + p^{-2it}$, which is approximately $1$ on average over $t \in [T, T+\Delta]$.
Similarly, for the symmetric-square family in Proposition \ref{prop:sym2lowerbound}, we let $\mathcal{N}$ consist of $p^2$ with $p$ prime and $p^2 \in [N, 2N]$, so  $|\mathcal{N}| \asymp \frac{N^{1/2}}{\log{N}}$.  The bias here may be seen from $\lambda_{\mathrm{sym}^2 u_j}(p^2) = 1+\lambda_j(p^2)$, which again is approximately $1$ on average over $t_j \in [T, T+\Delta]$.  
More details of the proof are presented in Section \ref{section:lowerbound}.

This is some evidence that the definition \eqref{eq:bilineardef} might require some modification, depending on some arithmetical features of the family $\mathcal{F}$.  More examples of families with biased sets would be welcome.

We also mention that Dunn and Radziwi\l \l {} \cite{DunnRadziwill} have shown that the optimstic bound does not hold for the family of cubic residue symbols.

\subsection{Sketch of proof of Theorem \ref{thm:mainthm}}
\label{subsection:sketch}
The Kuznetsov formula  leads to a diagonal term of size $\Delta T$ as well as a sum of Kloosterman sums roughly of the shape
\begin{equation}
\label{eq:sketchpartAfterKuz}
\frac{\Delta T}{N \sqrt{C}} \sum_{c \sim C} \sum_{m,n} a_m \overline{a_n} S(m^2,n^2;c) e_c(2mn) e^{i\frac{T^2 c}{mn}},
\end{equation}
where $C$ runs over dyadic segments with $1 \ll C \ll \frac{N^2}{\Delta T}$.
Here $e_c(2mn)$ comes from the main part of the phase of the Bessel-type transform from the Kuznetsov formula, and the factor $\exp(i\frac{T^2 c}{mn})$ is a lower-order term in the phase.  Since $e_c(2mn)$ is periodic in $m,n$ modulo $c$, it can be joined with the Kloosterman sum $S(m^2, n^2;c)$.  If $(mn,c) = 1$ then with $w=mn$ we can expand $F(w) = S(w^2, 1;c) e_c(2w)$ into Dirichlet characters, via
\begin{equation}
\label{eq:sketchpartFourier}
F(w) = \sum_{\chi \shortmod{c}} \widehat{F}(\chi) \chi(w),
\quad
\text{where}
\quad
\widehat{F}(\chi) = \frac{1}{\varphi(c)} \sum_{u \shortmod{c}} \overline{\chi}(u) S(u^2,1;c) e_c(2u).
\end{equation}
We similarly have an archimedean separation of variables by the Mellin transform, via
\begin{equation}
e^{i\frac{T^2 c}{mn}} \approx \frac{1}{\sqrt{P}} \int_{t \asymp P} f(t) \Big(\frac{T^2 c}{mn}\Big)^{-it} dt, \qquad P = 1+ \frac{T^2 C}{N^2},
\end{equation}
where $f$ is a smooth function satisfying $f(t) \ll 1$.  
Inserting these into \eqref{eq:sketchpartAfterKuz}, we obtain
\begin{equation}
\label{eq:sketchpartAfterFourier}
\frac{\Delta T}{N \sqrt{CP}} \int_{t \asymp P} f(t) T^{-2it}  \sum_{c \sim C}  c^{-it}
\sum_{\chi \shortmod{c}} \widehat{F}(\chi) 
\sum_{m,n} a_m \overline{a_n} \chi(mn) (mn)^{it} dt.
\end{equation}

As a heuristic, consider the contribution to \eqref{eq:sketchpartAfterFourier} from $\chi$ primitive modulo $c$.  For such $\chi$, $|\widehat{F}(\chi)| \ll c^{\varepsilon}$.  Then the classical hybrid large sieve (see Theorem \ref{thm:Gallagher}) gives a bound
\begin{equation}
\label{eq:sketchpartBoundchiPrimitive}
\frac{\Delta T}{N \sqrt{CP}} (C^2 P + N) |{\bf a}|^2.
\end{equation}
Note the latter term is $\frac{\Delta T}{\sqrt{CP}} \ll \Delta T$, which is already bounded by the diagonal term.  The former term is bounded by $\frac{\Delta T}{N} \frac{T}{N} \frac{N^4}{\Delta^2 T^2} = \frac{N^2}{\Delta}$.

On the opposite extreme from $\chi$ primitive is $\chi$ trivial.  It turns out (see Lemma \ref{lemma:FhatchiTrivialPrimeModulusBound}) that the most significant contribution from trivial $\chi$ comes from $c$ ranging over squares, in which case $\widehat{F}(\chi_0) = c^{1/2}$.  These terms contribute to \eqref{eq:sketchpartAfterFourier} an expression bounded by
\begin{equation}
\label{eq:sketchpartchiTrivial}
\frac{\Delta T}{N \sqrt{CP}} \int_{t \asymp P}   \sum_{\substack{c \sim C \\ c = \square}}  \sqrt{c}
|\sum_{m} a_m m^{it} \Big|^2 dt.
\end{equation}
The hybrid large sieve inequality in this case reduces to the mean value theorem for Dirichlet polynomials (see \cite[Theorem 9.1]{IK}), and produces a bound of the form
\begin{equation}
\label{eq:sketchpartchiTrivialSimplified}
\frac{\Delta T}{N \sqrt{CP}} (CP + CN) |{\bf a}|^2.
\end{equation}
Comparing this with \eqref{eq:sketchpartBoundchiPrimitive}, we see that the term $CP$ in \eqref{eq:sketchpartchiTrivialSimplified} is dwarfed by the term $C^2 P$ in \eqref{eq:sketchpartBoundchiPrimitive}.  However, the term $CN$ is now larger than the term $N$.  This term with $CN$ contributes
\begin{equation}
\label{eq:sketchpartchiTrivialCNterm}
\frac{\Delta T}{N \sqrt{CP}} CN = \Delta N.
\end{equation}

Now taking into account the bounds from the diagonal, the primitive characters, and the trivial characters, we obtain a bound of size
\begin{equation}
\label{eq:sketchpartPenultimateBound}
\Delta T + \Delta N + \frac{N^2}{\Delta}.
\end{equation}
Note that \eqref{eq:sketchpartPenultimateBound} has the feature that one part is increasing in $\Delta$, and another part is decreasing in $\Delta$.   This type of structure is commonly seen in harmonic analysis in association with the duality principle.  
It should thus not be surprising that \eqref{eq:sketchpartPenultimateBound} may be improvable by enlarging $\Delta$.  When $N \leq T$ then we replace $\Delta$ by $\Delta + \frac{N}{\sqrt{T}}$, which leads to the first bound in Theorem \ref{thm:mainthm}.  When $T \leq N \leq T^2$ then we replace $\Delta$ by $\Delta + \sqrt{N}$, leading to the second bound in Theorem \ref{thm:mainthm}.  Finally, when $N \geq T^2$ we replace $\Delta$ by $T$, giving the third bound in Theorem \ref{thm:mainthm}.

This discussion indicates that the rigorous bound from Theorem \ref{thm:mainthm} matches the bounds from considering the primitive characters and the trivial characters separately, which is encouraging.  Most of the work in this paper boils down to treating all the cases in a uniform manner, including dealing with $m,n$ that are not coprime to $c$, and characters $\chi$ that are neither primitive nor trivial.

\subsection{Remarks on possible improvements}
Since the term of size $\Delta N$ in Theorem \ref{thm:mainthm} matches in rough order of magnitude the lower bound on $\mathcal{M}_{\infty}$, it is natural to ask if the term $\Delta N$ is required in bounding $\mathcal{M}_0$, or if an improvement is possible.
A possible method to improve on the bound on $\mathcal{M}_0$  would be to continue to use the Kuznetsov formula, but to cancel (perhaps only partially) the contribution of the Eisenstein series from some part of the sum of Kloosterman sums.  Luo \cite{Luo} achieved this type of cancellation in a different spectral large sieve problem.  

As a possible clue in this direction, consider the contribution from $c=p^2$ with $p$ prime in \eqref{eq:sketchpartAfterKuz}.  The Kloosterman sum of modulus $p^2$ can be calculated in closed form (see \cite[Section 4.3]{IwaniecClassical}), showing $S(m^2, n^2;p^2) = p \sum_{\pm } e_p(\pm 2mn)$ for $(p, mn) = 1$.  The term with $e_p(-2mn)$ cancels the phase from $e_p(2mn)$ coming from the Bessel transform, and leads to an expression resembling
\begin{equation}
\frac{\Delta T}{N} \sum_{p} \sum_{m,n} a_m \overline{a_n} e^{i\frac{T^2 p^2}{mn}}.
\end{equation}
When $p \ll \frac{N}{T}$, the exponential is not oscillatory, so one sees a sum roughly of the form $\Delta |\sum_n a_n|^2$.  This now closely resembles the term \eqref{eq:remarksIntroCtsMainTerm?} responsible for the lower bound in Theorem \ref{thm:lowerboundEisPart}.

Another possible inroad on this problem would be to consider the holomorphic weight $k$ cusp form analog of the problem.  With only minor modifications of the proof, one could derive the analog of Theorem \ref{thm:mainthm} for this family. A gain is that there is no Eisenstein contribution, so there would be no need to cancel a contribution from the Eisenstein spectrum.  This \emph{might} be a hint that the term of size $\Delta N$ is not removable for the cusp forms.

\section{Preliminaries}
In this section we collect some tools needed for the proof of Theorem \ref{thm:mainthm}.
\subsection{Around the Kuznetsov formula}
\begin{myprop}[Kunzetsov formula]
Suppose that $h(t)$ is even, holomorphic in $|\text{Im}(t)| \leq \frac12 + \delta$, and satisfying $h(t) \ll (1+|t|)^{-2-\delta}$, for some $\delta > 0$.  Then for certain weights $\omega_j$, $\omega_t$ proportional to $w_j$ and $w_t$, respectively, we have
\begin{multline}
\sum_{j} \omega_j^{-1} \lambda_j(m) \lambda_j(n)  h(t_j) 
+ 
\int_{-\infty}^{\infty} \omega_t^{-1} \tau_{it}(m) \tau_{it}(n) h(t) dt 
\\
= \delta_{m=n} \frac{1}{\pi} \intR h(r) r \tanh(\pi r) dr
+ \sum_{c=1}^{\infty} c^{-1} S(m,n;c) H_{\infty}\Big(\frac{4 \pi \sqrt{mn}}{c}\Big),
\end{multline}
where
\begin{equation}
H_{\infty}(x)
= 2i \intR \frac{J_{2ir}(x)}{\cosh(\pi r)} r h(r) dr.
\end{equation}
\end{myprop}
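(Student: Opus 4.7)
The plan is to adopt the classical Bruggeman--Kuznetsov argument via Poincar\'e series, referring to standard references such as Iwaniec's \emph{Spectral Methods of Automorphic Forms} or \cite{IwaniecSpectralLargeSieve} for the detailed calculations. First, I would define the Poincar\'e series
\begin{equation*}
P_m(z, s) = \sum_{\gamma \in \Gamma_\infty \backslash \Gamma} \text{Im}(\gamma z)^s e(m \gamma z),
\end{equation*}
for $m \geq 1$ and $\real(s) > 1$, and compute its Petersson inner product $\langle P_m, P_n \rangle$ in two different ways. On the geometric side, unfolding one copy of the series and using the Bruhat decomposition for $SL_2(\mz)$ gives an expression involving $S(m,n;c)$ and a $K$-Bessel transform of a specific kernel depending on $s$, coming from the Whittaker expansion of the non-identity double cosets. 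On the spectral side, Parseval against the orthonormal basis of Maass cusp forms $u_j$ and Eisenstein series $E(\cdot, 1/2+it)$ produces $\sum_j w_j^{-1} \lambda_j(m) \lambda_j(n) \Gamma_{s}(t_j)$ together with its Eisenstein counterpart, where $\Gamma_s$ is a ratio of Gamma factors depending on $s$ and the spectral parameter.

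Equating the two expressions yields the Kuznetsov formula for the restricted class of test functions $h(t)$ arising as these $\Gamma_s(t)$. To extend to an arbitrary $h$ satisfying the stated hypotheses, I would invert the corresponding integral transform on Bessel functions, which is of Kontorovich--Lebedev type. The assumed evenness, holomorphy in the strip $|\text{Im}(t)| \leq 1/2 + \delta$, and decay $h(t) \ll (1+|t|)^{-2-\delta}$ are precisely the conditions needed to justify contour shifts past the poles of $\tanh(\pi r)$ and to guarantee absolute convergence of the Bessel transform $H_\infty$. The diagonal term $\delta_{m=n}$ then emerges from the trivial double coset in the Bruhat decomposition, paired with the Plancherel density $r \tanh(\pi r)$.

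The main technical obstacle is the inversion of the Bessel transform, where one must control the kernel $J_{2ir}(x)/\cosh(\pi r)$ uniformly on horizontal lines near the edge of the holomorphy strip and verify that the resulting $H_\infty(x)$ has the correct behavior both near $x = 0$ and as $x \to \infty$ to ensure convergence of the Kloosterman sum side. A secondary bookkeeping issue is the precise proportionality between the arithmetic weights $\omega_j, \omega_t$ and the Rankin--Selberg weights $w_j, w_t$, which is tracked by computing the Fourier coefficients of $P_m$ against $u_j$ and against $E(\cdot, 1/2+it)$ and comparing normalizations. Since the result is entirely standard in this form, I would in practice simply cite it from the literature rather than reproduce the derivation.
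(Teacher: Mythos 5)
Your proposal is correct and matches the paper's treatment: the paper states the Kuznetsov formula without proof as a standard result, and your outline is precisely the classical Bruggeman--Kuznetsov derivation (Poincar\'e series, unfolding versus spectral expansion, Kontorovich--Lebedev inversion) found in the standard references. Citing it, as you suggest at the end, is exactly what the paper does.
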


\begin{mylemma}[Properties of $H_{\infty}$]
\label{lemma:HinfinityProperties}
Let $T^{\delta} \leq \Delta \leq T^{1-\delta}$ for some $\delta>0$, and let 
$$h(t) = \frac{t^2+\frac14}{T^2} \Big[\exp(-(t-T)^2/\Delta^2) + \exp(-(t+T)^2/\Delta^2) \Big].$$  
Then $H_{\infty}(x)$ is very small unless
\begin{equation}
\label{eq:HinfinityTruncationSpot}
x \gg \Delta T^{1-\varepsilon}.
\end{equation}
Moreover, for $x$ satisfying \eqref{eq:HinfinityTruncationSpot}, $H_{\infty}(x)$ has an asymptotic formula of the form
\begin{equation}
H_{\infty}(x) = \sum_{\pm} \frac{\Delta T}{\sqrt{x}} e^{\pm ix}  e^{i \phi(x)} I(x) + O(T^{-A}),
\end{equation}
where $I = I_{\pm, A}$ is a smooth function satisfying $I^{(j)}(x) \ll_{j,A} x^{-j}$ for $x \gg \Delta T^{1-\varepsilon}$, and $\phi = \phi_{\pm}$ is a smooth function with an asymptotic expansion of the form $\phi(x) = c_1 \frac{T^2}{x} + c_2 \frac{T^4}{x^3} + \dots$.
\end{mylemma}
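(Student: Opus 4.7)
The plan is to combine Debye's uniform asymptotic expansion for $J_{2ir}(x)$ in the range $|r| \asymp T$ with a Gaussian integration in the variable $r$. Under the standing hypothesis $\Delta \geq T^{\delta}$, the non-trivial regime $x \gg \Delta T^{1-\varepsilon}$ automatically forces $x \gg T$, so $|r|/x \leq 1-\eta$ on the effective support of $h$, and Debye's expansion gives
$$J_{2ir}(x) = \sqrt{\frac{2}{\pi\sqrt{x^2+4r^2}}}\,\cos\!\bigl(\psi(x,r)-i\pi r\bigr)\bigl(1 + O(1/x)\bigr),$$
where $\psi(x,r) := \sqrt{x^2+4r^2} - 2r\operatorname{arcsinh}(2r/x) - \pi/4$. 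After division by $\cosh(\pi r)$, only one exponential survives for each sign of $r$, leaving
$$H_\infty(x) = 2i\sum_{\pm}\int_{\pm r > 0} \sqrt{\frac{2}{\pi\sqrt{x^2+4r^2}}}\, e^{\pm i\psi(x,r)} r h(r)\, dr + O(T^{-A}).$$

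In each $\pm$ piece the weight $h(r)$ is essentially a Gaussian of width $\Delta$ centred at $\pm T$. Since $\psi_r(x,r) = -2\operatorname{arcsinh}(2r/x)$ is approximately constant on the effective support, one linearises the phase about $r = \pm T$ and evaluates the resulting Gaussian integral by completing the square. The outcome has magnitude $\Delta \cdot \exp(-\Delta^2 \operatorname{arcsinh}^2(2T/x))$ up to smooth polynomial factors. Because $\operatorname{arcsinh}(2T/x) \asymp T/x$ in the regime of interest, this exponential is $O(T^{-A})$ precisely when $x \leq \Delta T^{1-\varepsilon}$, establishing the first assertion. When instead $x \gg \Delta T^{1-\varepsilon}$ the Gaussian evaluates to $\Delta$ up to smooth bounded factors, producing the claimed main term with amplitude $\Delta T/\sqrt{x}$ (using $\sqrt{x^2+4T^2} \sim x$ when $x \gg T$) and oscillation $e^{\pm i\psi(x,T)}$.

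The expansion of $\phi(x)$ then follows from a direct Taylor expansion of $\psi(x,T) = \sqrt{x^2+4T^2} - 2T\operatorname{arcsinh}(2T/x) - \pi/4$ as $x\to\infty$: one finds $\psi(x,T) = x + c_1 T^2/x + c_2 T^4/x^3 + \cdots - \pi/4$, and the constant $-\pi/4$ is absorbed into $I(x)$. The derivative bounds $I^{(j)}(x) \ll_j x^{-j}$ come from differentiating the Gaussian evaluation under the integral sign, each $\partial_x$ producing at most one factor of $1/x$ from the various $x$-dependent amplitudes. The principal technical obstacle is ensuring uniformity of the error terms in the Debye expansion throughout the entire range $x \gg \Delta T^{1-\varepsilon}$; this uniform analysis is standard in the Kuznetsov-formula literature (see \cite{IwaniecClassical, BlomerButtcane}), and we will import those estimates wholesale.
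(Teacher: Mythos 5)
The paper does not actually prove this lemma: it is imported wholesale from Jutila--Motohashi \cite{JM}, who obtain it by expressing the Bessel kernel through an integral representation and running a saddle-point analysis in the auxiliary variable. Your route --- the Debye-type uniform expansion of $J_{2ir}(x)$ in the (purely imaginary) order, followed by a Gaussian integration in $r$ --- is the other standard way to reach the same conclusion, and the computations you indicate are correct: the phase $\psi(x,r)=\sqrt{x^2+4r^2}-2r\operatorname{arcsinh}(2r/x)-\pi/4$, its derivative $\psi_r=-2\operatorname{arcsinh}(2r/x)$, the survival of a single exponential per sign of $r$ after division by $\cosh(\pi r)$, the decay factor $\exp(-c\Delta^2\operatorname{arcsinh}^2(2T/x))$ which is $O(T^{-A})$ exactly when $x\ll\Delta T^{1-\varepsilon}$, and the expansion $\psi(x,T)=x-2T^2/x+c_2T^4/x^3+\cdots-\pi/4$ giving the claimed $\phi$. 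Both approaches are of comparable difficulty; yours has the advantage of making the truncation point \eqref{eq:HinfinityTruncationSpot} transparent as the threshold where the Gaussian in $r$ stops oscillating.

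Two points need more care than you give them. First, the truncation claim requires the expansion of $J_{2ir}(x)$ uniformly for \emph{all} $x>0$ with $r\asymp T$, not only $x\gg T$. This does hold for purely imaginary order (there is no turning point, since $x^2+4r^2\gg T^2$ throughout), but the relative error is then $O\big((x^2+4r^2)^{-1/2}\big)$ rather than your $O(1/x)$, and for $x\ll T$ one has $\operatorname{arcsinh}(2T/x)\asymp\log(4T/x)$ rather than $T/x$ --- this only strengthens the decay, but your phrasing should not suggest the approximation $\operatorname{arcsinh}(2T/x)\asymp T/x$ is used there. Second, linearising the phase in $r$ is not by itself enough to produce the full expansion with error $O(T^{-A})$ together with the bounds $I^{(j)}(x)\ll_{j,A}x^{-j}$: one must expand $e^{i\psi}$ to higher order about $r=\pm T$ and observe that on the surviving range $x\gg\Delta T^{1-\varepsilon}$ (hence $x\gg T^{1+\delta-\varepsilon}$) the $k$-th correction over the effective support $|r\mp T|\ll\Delta T^{\varepsilon}$ is $\ll (\Delta^2 T^{O(\varepsilon)}/x)\cdot(\Delta/T)^{k-2}\ll T^{-(k-1)(\delta-O(\varepsilon))}$, so finitely many terms suffice for any fixed $A$. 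With these uniformity checks supplied, your argument is complete and recovers what \cite{JM} prove.
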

This lemma can be extracted from the work of Jutila and Motohashi \cite[pp.75--76]{JM}.

\subsection{The $GL_1$ large sieve}
\begin{mytheo}[Gallagher \cite{Gallagher}]
\label{thm:Gallagher}
Let $Q, T \geq 1$.  Then for any vector ${\bf a} = (a_n)$, we have
\begin{equation}
\int_{0}^{T} \sum_{q \leq Q}  \medspace\sumstar_{\chi \shortmod{q}} 
\Big| \sum_{n \leq N} a_n \chi(n) n^{it} \Big|^2 dt
\ll (Q^2 T + N) |{\bf a}|^2.
\end{equation}
\end{mytheo}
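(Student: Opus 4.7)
The inequality is Gallagher's hybrid large sieve. My plan is to treat $\chi(n)$ and $n^{it}$ as analogous ``characters''---multiplicative and archimedean, respectively---and reduce to two classical inputs: the multiplicative large sieve
\begin{equation*}
\sum_{q \leq Q} \sumstar_{\chi \shortmod q} \Big| \sum_n a_n \chi(n) \Big|^2 \ll (Q^2 + N) |\mathbf{a}|^2
\end{equation*}
and the mean value theorem for Dirichlet polynomials
\begin{equation*}
\int_0^T \Big| \sum_n a_n n^{it} \Big|^2 dt \ll (T+N) |\mathbf{a}|^2.
\end{equation*}

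Expanding the square and carrying out the $t$-integral gives
\begin{equation*}
\mathrm{LHS} = \sum_{m,n \leq N} a_m \overline{a_n}\, J_T(m/n) \sum_{q \leq Q} \sumstar_{\chi \shortmod q} \chi(m) \overline{\chi(n)},
\end{equation*}
where $J_T(1) = T$ and $|J_T(x)| \ll \min(T, |\log x|^{-1})$ for $x \neq 1$. The diagonal $m = n$ produces the $Q^2 T |\mathbf{a}|^2$ term, since the total number of primitive characters of modulus $\leq Q$ is $\asymp Q^2$. For the off-diagonal $m \neq n$, I would apply the standard orthogonality formula (valid when $(mn,q) = 1$)
\begin{equation*}
\sumstar_{\chi \shortmod q} \chi(m) \overline{\chi(n)} = \sum_{e \mid (q, m-n)} \varphi(e) \mu(q/e),
\end{equation*}
extending to $(mn, q) > 1$ by M\"obius inversion. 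Exchanging the order of summation so that $q$ is summed last exploits cancellation in $\sum_{e \mid q \leq Q} \mu(q/e)$; then a dyadic split in $|m-n|$ combined with Cauchy-Schwarz reduces the off-diagonal to a divisor-sum estimate bounded by $O(N |\mathbf{a}|^2)$.

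The main obstacle is obtaining the sharp $N |\mathbf{a}|^2$ bound for the off-diagonal rather than a weaker $QN |\mathbf{a}|^2$ or $NT |\mathbf{a}|^2$. This requires genuine cancellation from $\mu(q/e)$ in the orthogonality step, together with careful handling of the boundary $t \in \{0,T\}$ and of the imprimitivity conditions $(mn,q) > 1$. A naive sampling/discretization approach---invoking Gallagher's analytic inequality to replace the integral by $\sim T \log N$ sample points spaced by $1/\log N$ and applying the multiplicative large sieve at each---yields only the weaker bound $(Q^2 T + NT) |\mathbf{a}|^2$, so the oscillation in $\mu$ really is needed in order to pull the second term down from $NT$ to $N$.
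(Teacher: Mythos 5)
This is a classical theorem that the paper simply quotes from Gallagher without proof, so there is nothing in the paper to compare against; judged on its own terms, your proposal has a genuine gap, and you have in fact located it yourself. Expanding the square, computing $J_T(m/n)\ll\min(T,N/|m-n|)$, and applying the orthogonality relation $\sumstar_{\chi\shortmod{q}}\chi(m)\overline{\chi}(n)=\sum_{e\mid(q,m-n)}\varphi(e)\mu(q/e)$ leads, after summing $q=ef\le Q$ and bounding the M\"obius sum trivially, to an off-diagonal contribution of order $Q\,\|{\bf a}\|^2\sum_{h}d(h)\min(T,N/h)\approx QN\|{\bf a}\|^2$ (up to logarithms). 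Pulling this down to $N\|{\bf a}\|^2$ would require genuine cancellation in $\sum_{f\le Q/e}\mu(f)$ uniformly in the coprimality constraints, which is not available unconditionally in any usable form; asserting that ``a dyadic split in $|m-n|$ combined with Cauchy--Schwarz reduces the off-diagonal to a divisor-sum estimate bounded by $O(N\|{\bf a}\|^2)$'' is precisely the step that does not work. Note also that even the $T=0$ case (the multiplicative large sieve) is not proved by this orthogonality-plus-M\"obius computation: the standard route writes $\chi(n)=\tau(\overline{\chi})^{-1}\sum_{a}\overline{\chi}(a)e(an/q)$ for primitive $\chi$ and invokes the additive large sieve over the well-spaced Farey fractions $a/q$.

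The actual mechanism behind Gallagher's hybrid bound is not cancellation in $\mu$ but a shortening of the $n$-sum. Gallagher's lemma gives
\begin{equation*}
\int_{-T}^{T}\Big|\sum_{n}c_n n^{it}\Big|^2\,dt\ \ll\ T^2\int_{0}^{\infty}\Big|\sum_{x<n\le xe^{1/T}}c_n\Big|^2\,\frac{dx}{x},
\end{equation*}
applied with $c_n=a_n\chi(n)$. The inner sum now runs over an interval of length $\asymp 1+x/T\le 1+N/T$, so the multiplicative large sieve applied to each such short sum yields $\ll(Q^2+1+N/T)\sum_{x<n\le xe^{1/T}}|a_n|^2$; integrating in $x$ (each $n$ is hit on a set of logarithmic measure $1/T$) and multiplying by $T^2$ gives $(Q^2T+T+N)\|{\bf a}\|^2\ll(Q^2T+N)\|{\bf a}\|^2$. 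This is exactly the fix for the loss you correctly diagnosed in the naive discretization: the factor $T$ is removed because each sample sees only $N/T$ terms, not because the off-diagonal M\"obius sums oscillate. To repair your write-up, replace the orthogonality/off-diagonal analysis by Gallagher's lemma plus the multiplicative large sieve as black boxes.
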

We also need the following variant.
\begin{mycoro}
\label{coro:LargeSieveVariant}
Let $Q, T, d, g \geq 1$, with $g$ square-free.  
Define $d'$ to be the smallest integer so that $d|(d')^2$ (so
if $d = \prod_{p|d} p^{d_p}$, then 
 $d' = \prod_p p^{\lceil d_p/2 \rceil}$).
Then for any vector ${\bf a} = (a_n)$, we have
\begin{equation}
\label{eq:LargeSieveVariant}
\int_{0}^{T} \sum_{q \leq Q}  \medspace\sumstar_{\chi \shortmod{q}} 
\Big| \sum_{\substack{m,n \leq N \\ d|(m^2, n^2) \\ g| mn/d}} a_m \overline{a_n} \chi(mn) (mn)^{it} \Big| dt
\ll N^{\varepsilon} 
 \Big(Q^2 T + \frac{N}{d'}\Big) \sum_n |a_{nd'}|^2.
\end{equation}
\end{mycoro}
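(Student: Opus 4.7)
The plan is to reduce to Gallagher's inequality (Theorem~\ref{thm:Gallagher}) via a substitution that trivializes the divisibility by $d$, followed by a squarefree-based decoupling identity that separates the coupled condition $g\mid mn/d$. First I will substitute $m=d'm'$, $n=d'n'$: since $d\mid m^2$ forces $d'\mid m$ by the definition of $d'$ (and similarly for $n$), this substitution is lossless, and the conditions $d\mid m^2$, $d\mid n^2$ become automatic. Setting $b_k:=a_{d'k}$, the ranges become $m',n'\leq N/d'$ and $\sum_k|b_k|^2=\sum_n|a_{d'n}|^2$. Since $d\mid(d')^2$, the integer $e:=(d')^2/d$ satisfies $mn/d=e\,m'n'$; using that $g$ is squarefree, the condition $g\mid e\,m'n'$ is equivalent to $g_1\mid m'n'$ with $g_1:=g/(g,e)$, which is again squarefree. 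Bounding $|\chi(d')^2(d')^{2it}|\leq 1$, we discard those factors.

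To remove the coupling $g_1\mid m'n'$, I will use the identity (valid for squarefree $g_1$)
\[
\mathbf{1}_{g_1\mid m'n'} \;=\; \sum_{h_1h_2=g_1} \mathbf{1}_{h_1\mid m'}\,\mathbf{1}_{h_2\mid n'}\,\mathbf{1}_{(m',h_2)=1},
\]
obtained by assigning each prime $p\mid g_1$ to $h_1$ if $p\mid m'$ and to $h_2$ otherwise. Substituting this and factoring the $m'$ and $n'$ sums, the inner sum becomes a sum over $h_1h_2=g_1$ of products $F_{h_1,h_2}(t,\chi)\,G_{h_2}(t,\chi)$, where $F_{h_1,h_2}(t,\chi)=\sum_{h_1\mid m',\,(m',h_2)=1} b_{m'}\chi(m')m'^{it}$ and $G_{h_2}(t,\chi)=\sum_{h_2\mid n'} \overline{b_{n'}}\chi(n')n'^{it}$ are Dirichlet polynomials of length $\leq N/d'$ with coefficients pointwise dominated by $|b_k|$.

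The remaining steps are routine. Applying the triangle inequality in $(h_1,h_2)$ followed by Cauchy--Schwarz over the triple integration/summation in $(t,q,\chi)$, I bound each term by
\[
\Bigl(\int_0^T \sum_{q\leq Q} \sumstar_{\chi\shortmod q} |F_{h_1,h_2}|^2\,dt\Bigr)^{1/2}
\Bigl(\int_0^T \sum_{q\leq Q} \sumstar_{\chi\shortmod q} |G_{h_2}|^2\,dt\Bigr)^{1/2},
\]
and then apply Theorem~\ref{thm:Gallagher} to each factor. Each $L^2$-bound is $\ll (Q^2T + N/d')\sum_{m'}|b_{m'}|^2$, since the coefficients are supported on $m'\leq N/d'$ and dominated by $|b_{m'}|$, so their geometric mean is of the same order. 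Summing over the $\tau(g_1)\leq g^{\varepsilon}\ll N^{\varepsilon}$ factorizations yields~\eqref{eq:LargeSieveVariant}. The only essential ingredient is the squarefree decomposition in the second paragraph; everything else is bookkeeping.
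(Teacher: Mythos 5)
Your proof is correct, but it takes a genuinely different route from the paper's. The paper first extracts $h=(m,n)$ and changes variables $m\to hm$, $n\to hn$, so that $d\mid(m^2,n^2)$ becomes $d\mid h^2$ (equivalently $d'\mid h$) and $g\mid mn/d$ becomes a congruence modulo $g/(g,h^2/d)$ split as $g_1\mid m$, $g_2\mid n$; it then detects the leftover coprimality $(m,n)=1$ by M\"obius inversion before invoking Gallagher. You instead observe that $d\mid(m^2,n^2)$ is \emph{equivalent} to $d'\mid(m,n)$ and substitute $m=d'm'$, $n=d'n'$ outright, which eliminates the $d$-condition without ever naming the gcd, and you decouple $g_1\mid m'n'$ by the exact squarefree splitting $\mathbf{1}_{g_1\mid m'n'}=\sum_{h_1h_2=g_1}\mathbf{1}_{h_1\mid m'}\mathbf{1}_{h_2\mid n'}\mathbf{1}_{(m',h_2)=1}$, whose built-in one-variable coprimality condition makes the M\"obius step unnecessary; Cauchy--Schwarz and Gallagher then finish as in the paper. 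Your identity is correct (the assignment of each prime of $g_1$ to $h_1$ or $h_2$ according to whether it divides $m'$ is forced, so exactly one term survives), and the bookkeeping checks out: the coefficients of $F_{h_1,h_2}$ and $G_{h_2}$ are supported on $[1,N/d']$ and dominated by $|b_k|=|a_{d'k}|$, so each $L^2$ factor is $\ll(Q^2T+N/d')\sum_n|a_{nd'}|^2$, and the number of factorizations is $\tau(g_1)\ll N^{\varepsilon}$ (note $g\leq N^2$ may be assumed since otherwise the sum is empty). What your version buys is the elimination of the auxiliary sums over $h$ and the M\"obius variable $\ell$, which the paper must then resum using divisor bounds; what it gives up is nothing for the stated corollary, though the paper's gcd-parameterization retains slightly more information (the savings $N/(hg_1\ell)$ and the restriction of $\mathbf{a}$ to multiples of $hg_1\ell$) that is simply discarded in the final simplification.
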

\begin{proof}
First set $h= (m,n)$, and change variables $m \rightarrow hm$ and $n \rightarrow hn$.  
After this change, the condition $d|(m^2, n^2)$ becomes $d|h^2$.  Likewise, the condition $g|\frac{mn}{d}$ becomes $mn \equiv 0 \pmod{\frac{g}{(g,h^2/d)}}$. 
Since $(m,n) = 1$, we can parameterize the solutions to this latter congruence by writing $g_1 g_2 = \frac{g}{(g, h^2/d)}$ and imposing $m \equiv 0 \pmod{g_1}$ and $n \equiv 0 \pmod{g_2}$.
Then we have
\begin{equation}
\Big| \sum_{\substack{m,n \leq N \\ d|(m^2, n^2) \\ g|mn/d}} a_m \overline{a_n} \chi(mn) (mn)^{it} \Big|
\leq
\sum_{d|h^2}
\sum_{g_1 g_2 = \frac{g}{(g,h^2/d)}} 
\Big| \sum_{\substack{(mg_1,ng_2) = 1 
}} a_{mg_1 h} \overline{a_{n g_2 h}} \chi(mn) (mn)^{it} \Big|.
\end{equation}
We then apply M\"obius inversion to detect the condition $(m,n) = 1$ and thereby separate the variables.  After this step, we then apply Theorem \ref{thm:Gallagher}.  This gives that the left hand side of \eqref{eq:LargeSieveVariant} is bounded by
\begin{equation}
\sum_{d|h^2}
\sum_{g_1 g_2 = \frac{g}{(g,h^2/d)}} 
\sum_{\ell} |\mu(\ell)| \Big(Q^2 T + \frac{N}{h g_1 \ell} \Big) \sum_n |a_{n h g_1 \ell}|^2.
\end{equation}
The condition $h^2 \equiv 0 \pmod{d}$ is equivalent to $h \equiv 0 \pmod{d'}$.  Simplifying the above expression using a divisor function bound leads quickly to \eqref{eq:LargeSieveVariant}.
\end{proof}

\section{Structural steps}
\subsection{Reduction}
We now embark on the proof of Theorem \ref{thm:mainthm}.  Our first stage 
parallels the Fourier/Mellin decomposition indicated in Section \ref{subsection:sketch}.
  We insert the weight function $h(t)$ from Lemma \ref{lemma:HinfinityProperties}, extend the spectral sum/integral to all $t_j$, $t$, open the square, and apply the Kuznetsov formula.  The diagonal term is of size $\Delta T$.  We then obtain $\mathcal{M}(\Delta, T, N) \ll \Delta T + \max_{|{\bf a}| = 1} |\mathcal{K}(\Delta, T, N)|$, where
\begin{equation}
\mathcal{K}(\Delta, T, N) = 
\sum_{m,n} a_m \overline{a_n} 
\sum_{c=1}^{\infty} \frac{S(m^2, n^2;c)}{c} H_{\infty}\Big(\frac{4 \pi mn}{c}\Big).
\end{equation}
According to Lemma \ref{lemma:HinfinityProperties}, we write $H_{\infty}(x) = e^{ix} H_{+}(x) + e^{-ix} H_{-}(x) + O(T^{-A})$, and correspondingly write
$\mathcal{K} = \sum_{\pm} \mathcal{K}_{\pm} + O(T^{-A} |{\bf a}|^2)$.  We have
\begin{equation}
\label{eq:Kformula}
\mathcal{K}_{\pm}(\Delta, T, N)
=
\sum_{\pm}
 \sum_{m,n} a_m \overline{a_n} D_{\pm}(m,n), 
\end{equation}
where
\begin{equation}
 D_{\pm}(m,n) = \sum_{c=1}^{\infty} \frac{S(m^2, n^2;c) e_c(\pm 2mn)}{c} H_{\pm}\Big(\frac{4 \pi mn}{c}\Big).
\end{equation}
Both terms $\mathcal{K}_{\pm}$ may be bounded in the same way, as they are essentially complex conjugates of each other.  We then proceed with developing $D_{+}$, and drop the subscript $+$ from the notation.  

\begin{mylemma}
We have
\begin{equation}
\label{eq:DmnInitialDecomposition}
D(m,n) = \sum_{d|(m^2, n^2)}
\sum_{g|\frac{mn}{d}} \frac{\mu(g)}{g}
\sum_{(c, mn/d) = 1}  \frac{S((\frac{mn}{dg})^2, 1;c) e_c(\frac{2mn}{dg})}{c}
H\Big(\frac{4\pi mn}{dgc}\Big).
\end{equation}
\end{mylemma}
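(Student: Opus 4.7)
The plan is to prove \eqref{eq:DmnInitialDecomposition} by first establishing the pointwise identity
\begin{equation*}
\frac{S(m^2,n^2;c_L)}{c_L}\,e_{c_L}(2mn)
=\!\!\sum_{\substack{dg c=c_L\\ d\mid(m^2,n^2),\ g\mid mn/d\ \text{sqfree}\\ (c,\,mn/d)=1}}\!\!
\frac{\mu(g)}{g}\cdot\frac{S((mn/(dg))^2,1;c)}{c}\,e_c\!\Bigl(\frac{2mn}{dg}\Bigr)
\end{equation*}
for every positive integer $c_L$, and then summing against $H(4\pi mn/c_L)=H(4\pi mn/(dgc))$ and reindexing $c_L=dgc$ to recover \eqref{eq:DmnInitialDecomposition}. (I have already checked this pointwise identity in small cases such as $m=n=2,\ c_L=4$, where both sides equal $2$ via the two terms $(d,g)=(2,2)$ and $(4,1)$.)

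To prove the pointwise identity, I would factor $c_L=c_1 c_2$ with $c_1\mid(mn)^\infty$ and $(c_2,mn)=1$, and split both the Kloosterman sum and the additive character via twisted multiplicativity. The $c_2$-factor is straightforward: using $(c_2,n)=1$ and the standard identity $S(a,b;c)=S(ab,1;c)$ for $(b,c)=1$, one gets $S(m^2\overline{c_1},n^2\overline{c_1};c_2)=S((mn\overline{c_1})^2,1;c_2)$, and the congruence $mn\overline{dg}\equiv mn/(dg)\pmod{c_2}$ (valid because $dg\mid mn$ is forced by $d\mid(m^2,n^2)\mid mn$ and $g\mid mn/d$) matches this with the $c_2$-projection of $S((mn/(dg))^2,1;c)$ on the right-hand side. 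The substantive content is the $c_1$-factor, which I would evaluate prime by prime: for each $p\mid mn$, the local sum $S(m^2,n^2;p^a)\,e_{p^a}(2mn)$ vanishes past a threshold controlled by $v_p(m)$ and $v_p(n)$ (for example $S(4,1;4)=0$ when $p=2$, $m=2$, $n=1$, $a=2$), and in the non-vanishing range reduces to an elementary additive expression. Reassembling across primes, those in $(m,n)$ contribute the factor $d\mid(m^2,n^2)$ (absorbing the ``squared'' common part), while those in $mn$ but outside $(m,n)$ contribute the squarefree M\"obius weight $\mu(g)$; the $1/c=dg/c_L$ prefactor then converts $d\mu(g)$ into $\mu(g)/g$.

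The main obstacle is the local Kloosterman computation on prime powers $p^a$ with $p\mid mn$ together with its reassembly: one must verify that the vanishing and non-vanishing ranges produce exactly the stated divisibility constraints $d\mid(m^2,n^2)$ and $g\mid mn/d$ squarefree, with no extraneous terms, and that the coefficients recombine uniformly to $d\mu(g)$. Once this local-to-global step is secured, the coprimality condition $(c,mn/d)=1$ appears naturally: the reindexed variable $c$ absorbs the residual $c_1/(dg)$ (coprime to $dg$ by the coprime factorization) together with $c_2$ (coprime to $mn$), so $c$ can share prime factors only with $d$ and is coprime to $mn/d$, matching the constraint in \eqref{eq:DmnInitialDecomposition}.
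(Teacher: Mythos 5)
Your target pointwise identity (before summing against $H$ and reindexing $c_L=dgc$) is correct -- it is precisely what the paper's argument establishes -- and your treatment of the primes $p\nmid mn$ via $S(a,b;c)=S(ab,1;c)$, as well as the coprimality bookkeeping $(c,mn/d)=1$ at the end, are fine. But there is a genuine gap: the step you yourself label ``the main obstacle,'' namely evaluating $S(m^2,n^2;p^a)\,e_{p^a}(2mn)$ at primes $p\mid mn$ and showing that the local contributions recombine to the coefficients $d\,\mu(g)$, is the entire content of the lemma, and it is never carried out. It is also not a routine ``vanishing past a threshold'' computation: at a prime with $v_p(m)=v_p(n)=v\ge 1$ one has, e.g., $S(p^{2v},p^{2v};p^a)=\varphi(p^a)$ for $a\le 2v$, which the decomposition must reproduce through \emph{several} simultaneously nonvanishing terms; the local structure is genuinely two-layered, with $d=p^j$ ranging over all $0\le j\le \min(2v_p(m),2v_p(n),a)$ (not just squarefree values), a squarefree layer $g\in\{1,p\}$ on top of it, and $c$ permitted to carry $p$ only when $j=v_p(mn)$. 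Reconstructing exactly this combinatorics from scratch is the hard part of the lemma.

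The paper obtains the $d$-layer in one stroke from the Selberg identity \cite[(4.10)]{IwaniecClassical},
\begin{equation*}
S(m^2,n^2;c)=\sum_{d\mid(m^2,n^2,c)} d\, S\bigl(m^2n^2/d^2,1;c/d\bigr),
\end{equation*}
after which only the $g$-layer remains; that is handled by writing $w=mn/d$, parameterizing $c$ by $g=\gcd(w,c)$, discarding the terms with $(c,g)\neq 1$ via the vanishing $S(p^k,1;p^j)=0$ for $j\ge 2$, $k\ge 1$, and evaluating $S(w^2\overline{c}^2,1;g)=S(0,1;g)=\mu(g)$ as a Ramanujan sum. I recommend you either invoke Selberg's identity at the outset -- at which point your plan collapses onto the paper's proof -- or actually write out the local evaluation at $p\mid mn$ in full. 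As written, the proposal assumes the crux rather than proving it.
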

\begin{proof}
We begin with the Selberg identity \cite[(4.10)]{IwaniecClassical}  
$$S(m^2, n^2;c) = \sum_{d|(m^2, n^2, c)} d S(m^2 n^2/d^2, 1;c/d),$$
which quickly leads to
\begin{equation}
D(m,n) = \sum_{d|(m^2, n^2)} \sum_{c=1}^{\infty} \frac{S((mn/d)^2, 1;c) e_c(2mn/d)}{c} H\Big(\frac{4 \pi mn/d}{c}\Big).
\end{equation}
As shorthand, let $w=mn/d$, and parameterize the sum over $c$ by the value of $g=\gcd(w,c)$.
Then 
\begin{equation}
\sum_{c=1}^{\infty} \frac{S(w^2, 1;c) e_c(2w)}{c} H\Big(\frac{4 \pi w}{c}\Big)
=
\sum_{g|w} g^{-1}
\sum_{(c, w/g) = 1}  
\frac{S(w^2, 1;gc) e_c(2w/g)}{c} H\Big(\frac{4 \pi w/g}{c}\Big).
\end{equation}
To simplify this, we claim that if $(c,g) \neq 1$ then $S(w^2, 1;gc) = 0$.  In turn this follows from the fact that $S(p^k, 1;p^c) = 0$ if $c \geq 2$ and $k \geq 1$, which can be proved elementarily.  With the condition $(g,c) = 1$ in effect, we then have $S(w^2, 1;gc) = S(w^2 \overline{c}^2,1;g) S(w^2 \overline{g}^2, 1;c) = \mu(g) S((w/g)^2, 1;c)$.  At this point, direct substitutions complete the proof.
\end{proof}

\subsection{Finite Fourier analysis}
Our next stage of development is multiplicative Fourier analysis, mirroring \eqref{eq:sketchpartFourier} from the sketch.  As in the sketch, define $F(w) = S(w^2, 1;c) e_c(2w)$, where $(w,c) = 1$.  Then \eqref{eq:sketchpartFourier} holds, and with a change of variables we have
\begin{equation}
\label{eq:Fhatdef2}
\widehat{F}(\chi) 
= 
\frac{1}{\varphi(c)} \sum_{u, t \shortmod{c}} \overline{\chi}(ut) e_c(t (u+1)^2).
\end{equation}
Applying this to \eqref{eq:DmnInitialDecomposition}, with $w=\frac{mn}{dg}$ we obtain
\begin{equation}
\label{eq:DmnAfterFourierbeforeMellin}
D(m,n) = 
\sum_{d|(m^2, n^2)}
\sum_{g|\frac{mn}{d}} \frac{\mu(g)}{g}
\sum_{c =1}^{\infty} 
\sum_{\chi \shortmod{c}} 
\frac{\widehat{F}(\chi) \chi(mn \overline{dg})}{c}
H\Big(\frac{4\pi mn}{dgc}\Big).
\end{equation}
Section \ref{section:Fhat} is devoted to understanding $\widehat{F}(\chi)$ in greater detail.  

\subsection{Archimedean Fourier/Mellin analysis}
The representation \eqref{eq:DmnAfterFourierbeforeMellin}  neatly separates the variables $m,n$ with the multiplicative character $\chi$.  However, they remain joined inside $H(x)$, so we will use Mellin inversion as the analogous tool at the archimedean place.  Suppose that $1 = \sum_X W(x/X)$ is a smooth dyadic partition of unity, where $X$ runs over numbers of the form $2^{j/2}$ with $j \in \mz$, and $W$ has support on $[1,2]$.  
Define
\begin{equation}
\label{eq:GMellinRepresentation}
G(x) = G_X(x) = W(x/X) H(4 \pi x),
\quad \text{and}
\quad
\widetilde{G}(-s) = \int_0^{\infty} G(x) x^{-s} \frac{dx}{x}.
\end{equation}
\begin{mylemma}
\label{lemma:GMellinproperties}
Suppose $X \gg \Delta T^{1-\varepsilon}$, and let
\begin{equation}
P = 1 + \frac{T^2}{X}.
\end{equation}
Then $\widetilde{G}(-s)$ is analytic on $\mathbb{C}$ and satisfies
\begin{equation}
|\widetilde{G}(-\sigma-it)| \ll_{\sigma, A, \varepsilon} \Delta T \frac{X^{-\sigma}}{\sqrt{XP}} \Big(1+ \frac{|t|}{P T^{\varepsilon}}\Big)^{-A}.
\end{equation}
\end{mylemma}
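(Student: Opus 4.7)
First I would note that analyticity of $\widetilde{G}(-s)$ on $\C$ is immediate: since $G$ is smooth with compact support in $(0,\infty)$, the Mellin integral converges absolutely for every $s\in\C$ and defines an entire function.

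For the quantitative estimate, the plan is to insert the asymptotic expansion of $H=H_+$ from Lemma \ref{lemma:HinfinityProperties} and substitute $x=Xy$, obtaining (up to a negligible error of size $O(T^{-A})$)
\[
\widetilde{G}(-\sigma-it) =\frac{\Delta T\, X^{-\sigma-it}}{\sqrt{4\pi X}} \int_1^2 V(y)\, e^{i\Psi(y)}\,dy, \qquad \Psi(y)=\phi(4\pi X y)-t\log y,
\]
with $V(y)=W(y)\, y^{-\sigma-3/2}\, I(4\pi X y)$ smooth on $[1,2]$ and $V^{(j)}(y)\ll_{\sigma,j}1$. Using the expansion $\phi(x)=c_1 T^2/x+c_2 T^4/x^3+\cdots$ together with the hypothesis $X\gg\Delta T^{1-\varepsilon}\gg T$, each successive term in the expansion of $\phi(4\pi X y)$ is smaller than its predecessor by a factor of order $(T/X)^2\ll 1$, so
\[
\Psi'(y) = \frac{c_1 T^2}{4\pi X y^2}-\frac{t}{y}+\text{(lower order)}, \qquad \Psi''(y) = -\frac{c_1 T^2}{2\pi X y^3}+\frac{t}{y^2}+\text{(lower order)},
\]
where the leading $\phi$-contributions have magnitude $\asymp T^2/X$ and definite sign.

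In the range $|t|\geq c_0 P T^\varepsilon$ (for a suitable absolute constant $c_0$), the term $-t/y$ dominates $\Psi'$, so $|\Psi'(y)|\asymp|t|$ on $[1,2]$. I would then integrate by parts $A$ times, using $V^{(j)}\ll 1$, to get $\int V\, e^{i\Psi}\,dy\ll_A|t|^{-A}$, producing the claimed decay factor $(1+|t|/(PT^\varepsilon))^{-A}$. In the complementary range $|t|\leq c_0 P T^\varepsilon$, I split into subcases: if $P=O(T^\varepsilon)$, then $\sqrt{XP}\asymp\sqrt{X}$ up to $T^\varepsilon$, and the trivial bound $\int|V|\,dy\ll 1$ already gives the claim. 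Otherwise $P\gg T^\varepsilon$, and I apply stationary phase: the equation $\Psi'(y_0)=0$ has at most one solution $y_0\in[1,2]$, occurring only when $|t|\asymp P$, and substitution of this relation into $\Psi''(y_0)$ yields $\Psi''(y_0)= c_1 T^2/(4\pi X y_0^3)+\text{(lower order)}$, of magnitude $\asymp P$. A standard stationary phase estimate (or Van der Corput's second-derivative test applied on subintervals) then gives $\int V\,e^{i\Psi}\,dy\ll P^{-1/2}$, as required.

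The hard part will be the middle regime $|t|\asymp P$ inside the second case, where $\Psi''$ may vanish at points other than the stationary point $y_0$; I plan to handle this by exploiting the non-degeneracy $|\Psi''(y_0)|\asymp P$ computed above, splitting $[1,2]$ into a short neighborhood of $y_0$ (bounded by its length) and the complement (bounded by integration by parts using the lower bound on $|\Psi'|$ there).
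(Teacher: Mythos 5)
Your proposal is correct and follows essentially the same route as the paper: analyticity from compact support, insertion of the asymptotic expansion of $H$ from Lemma \ref{lemma:HinfinityProperties}, integration by parts in the range where the phase is non-stationary (giving both the $t$-decay and the case $P\ll T^{\varepsilon}$), and stationary phase with $|\Psi''(y_0)|\asymp P$ when $P\gg T^{\varepsilon}$. The paper compresses the second case into a citation of a ``robust stationary phase'' result in \cite{KPY}, which is exactly the uniform treatment of the boundary/degenerate issues you flag at the end, so your outline fills in the same details that reference supplies.
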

\begin{proof}
Since $G$ is smooth and compactly supported on the positive reals, its Mellin transform is entire.
By Lemma \ref{lemma:HinfinityProperties}, we have 
\begin{equation}
\widetilde{G}(-s) = \Delta T \int_0^{\infty} x^{-1/2-s} W(x/X) I(x) e^{ic_1 \frac{T^2}{x} + \dots}  \frac{dx}{x}.
\end{equation}
If $X \gg T^{2-\varepsilon}$, i.e., $P \ll T^{\varepsilon}$, then the bound follows by standard integration by parts.  For $X \ll T^{2-\varepsilon}$, i.e., $P \gg T^{\varepsilon}$, 
the bound results from robust stationary phase analysis (see \cite{KPY}, for instance). 
\end{proof}

Applying \eqref{eq:GMellinRepresentation} to \eqref{eq:DmnAfterFourierbeforeMellin}, and interchanging the orders of summation and integration (valid for $\sigma >2$, using only a trivial bound $|\widehat{F}(\chi)| \leq c$), we deduce the following, which is similar in spirit to \cite[(3.1)]{Jutila}.
\begin{mylemma}
\label{lemma:DmnFourierMellin}
For $\sigma>2$, we have
\begin{equation}
\label{eq:DmnFourierMellin}
D(m,n) = 
\sum_{X \text{ dyadic}}
\sum_{d|(m^2, n^2)}
\sum_{g|\frac{mn}{d}} \frac{\mu(g)}{g} 
 \int_{(\sigma)} \widetilde{G}(-s)
\Big(\frac{mn}{dg}\Big)^s
\sum_{c =1}^{\infty} 
\sum_{\chi \shortmod{c}} 
\frac{\widehat{F}(\chi) \chi(mn \overline{dg})}{c^{1+s}} \frac{ds}{2 \pi i}.
\end{equation}
\end{mylemma}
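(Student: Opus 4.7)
The plan is to begin from the formula \eqref{eq:DmnAfterFourierbeforeMellin} for $D(m,n)$ established in the preceding lemma, insert a dyadic partition of unity in the argument of $H$, and then apply Mellin inversion to each dyadic piece. Concretely, since $1 = \sum_X W(x/X)$ one may write $H(4\pi x) = \sum_X G_X(x)$ with $G_X$ as in \eqref{eq:GMellinRepresentation}, and Mellin inversion yields
\begin{equation*}
G_X(x) = \frac{1}{2\pi i}\int_{(\sigma)} \widetilde{G}(-s) \, x^s \, ds
\end{equation*}
for any real $\sigma$. Applying this at $x = mn/(dgc)$ and pulling out the factor $(mn/(dg))^s$ while absorbing $c^{-s}$ into the $c$-sum gives the right-hand side of \eqref{eq:DmnFourierMellin}, provided the resulting formal interchange of the $s$-integral with the sums over $X$, $d$, $g$, $c$, and $\chi$ can be justified.

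The only technical point is precisely that interchange, and this is where the hypothesis $\sigma > 2$ enters. The trivial bound $|\widehat{F}(\chi)| \leq c$ coming from \eqref{eq:Fhatdef2} (each of the $c^2$ terms there has modulus $1$, divided by $\varphi(c) \gg c^{1-\varepsilon}$) yields
\begin{equation*}
\Big| \sum_{\chi \shortmod{c}} \widehat{F}(\chi) \chi(mn \overline{dg})\Big| \ll c^{2+\varepsilon},
\end{equation*}
so that the combined $(c,\chi)$ tail is majorized by $\sum_c c^{1-\sigma+\varepsilon}$, which converges for $\sigma > 2$. For each such $c$, the $s$-integral converges absolutely by the rapid decay of $\widetilde{G}(-s)$ in $|\mathrm{Im}(s)|$ supplied by Lemma \ref{lemma:GMellinproperties}. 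The sums over $d$ and $g$ are finite divisor sums, and for each fixed tuple $(m,n,d,g,c)$ only $O(1)$ values of $X$ contribute because $W$ is supported in $[1,2]$. Fubini's theorem then permits the interchange and completes the proof. This is essentially the same maneuver as in \cite[(3.1)]{Jutila}, and no further difficulty arises.
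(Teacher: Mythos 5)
Your proposal is correct and follows essentially the same route as the paper: insert the dyadic partition and Mellin representation \eqref{eq:GMellinRepresentation} into \eqref{eq:DmnAfterFourierbeforeMellin}, then interchange summation and integration, justified for $\sigma>2$ by the trivial bound $|\widehat{F}(\chi)|\leq c$. The paper states this in one sentence; your write-up simply supplies the absolute-convergence details.
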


\section{Properties of $\widehat{F}$}
\label{section:Fhat}

\begin{mylemma}
\label{lemma:absFhatIsMultiplicative}
Suppose $c=c_1 c_2$ with $(c_1, c_2) = 1$, and $\chi = \chi_1 \chi_2$ with $\chi_j$ modulo $c_j$ for $j=1,2$. Then $\widehat{F}(\chi)$ satisfies the twisted-multiplicativity relation
\begin{equation}
\label{eq:absFhatIsMultiplicative}
\widehat{F}(\chi)
=
\overline{\chi_1}(c_2) \overline{\chi_2}(c_1)
\widehat{F}(\chi_1) \widehat{F}(\chi_2),
\end{equation}
and so $|\widehat{F}(\chi)|$ is multiplicative.
\end{mylemma}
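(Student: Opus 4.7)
The plan is to prove the twisted-multiplicativity by applying the Chinese Remainder Theorem to both the multiplicative and additive characters in the formula
\begin{equation*}
\widehat{F}(\chi) = \frac{1}{\varphi(c)} \sum_{u, t \shortmod{c}} \overline{\chi}(ut) e_c(t(u+1)^2).
\end{equation*}

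First, since $(c_1, c_2) = 1$, parameterize $u, t \shortmod{c}$ by pairs $(u_1, u_2)$ and $(t_1, t_2)$ with $u_j, t_j \shortmod{c_j}$ via CRT. Under this bijection, the multiplicative character factors cleanly as $\overline{\chi}(ut) = \overline{\chi_1}(u_1 t_1)\, \overline{\chi_2}(u_2 t_2)$, using that $\chi = \chi_1 \chi_2$ and that $u \equiv u_j \shortmod{c_j}$.

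Next I decompose the additive character using the standard identity $\frac{1}{c_1 c_2} \equiv \frac{\overline{c_2}}{c_1} + \frac{\overline{c_1}}{c_2} \shortmod{1}$, where $\overline{c_2}$ denotes the inverse of $c_2$ modulo $c_1$, and similarly $\overline{c_1}$ denotes the inverse of $c_1$ modulo $c_2$. This yields
\begin{equation*}
e_c(t(u+1)^2) = e_{c_1}\bigl(\overline{c_2}\, t_1 (u_1+1)^2\bigr)\, e_{c_2}\bigl(\overline{c_1}\, t_2 (u_2+1)^2\bigr).
\end{equation*}
Then, to eliminate the inconvenient factors $\overline{c_2}$ and $\overline{c_1}$ from the exponentials, change variables $t_1 \mapsto c_2 t_1 \shortmod{c_1}$ (a valid bijection since $(c_1, c_2) = 1$) and $t_2 \mapsto c_1 t_2 \shortmod{c_2}$. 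These substitutions simplify the additive characters to $e_{c_j}(t_j (u_j+1)^2)$, but introduce factors $\overline{\chi_1}(c_2)\, \overline{\chi_2}(c_1)$ from $\overline{\chi_1}(u_1 c_2 t_1) = \overline{\chi_1}(c_2)\, \overline{\chi_1}(u_1 t_1)$ and its $c_2$-analog.

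Finally, using $\varphi(c_1 c_2) = \varphi(c_1) \varphi(c_2)$, the double sum over $(u_1, t_1, u_2, t_2)$ factors as a product, and each factor is exactly the definition of $\widehat{F}(\chi_j)$. This yields \eqref{eq:absFhatIsMultiplicative}. The multiplicativity of $|\widehat{F}(\chi)|$ follows because $(c_1, c_2) = 1$ forces $|\chi_1(c_2)| = |\chi_2(c_1)| = 1$. The proof involves no real obstacle beyond the careful bookkeeping of the CRT change of variables; the structure of the quadratic argument $(u+1)^2$ is unaffected since the substitution acts only on $t_j$, not on $u_j$.
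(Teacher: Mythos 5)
Your proof is correct and matches the paper's approach: the paper's entire proof is ``Standard with Chinese remainder theorem,'' and your argument is exactly the standard CRT computation being alluded to, with the splitting $e_c(a)=e_{c_1}(\overline{c_2}\,a)\,e_{c_2}(\overline{c_1}\,a)$ and the change of variables $t_j$ producing the twist factors $\overline{\chi_1}(c_2)\,\overline{\chi_2}(c_1)$.
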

\begin{proof}
Standard with Chinese remainder theorem.
\end{proof}

In light of Lemma \ref{lemma:absFhatIsMultiplicative}, 
it suffices to understand $\widehat{F}(\chi)$ when $c= p^k$.

\begin{mylemma}[Primitive case]
\label{lemma:FhatchiPrimitiveModulusBound}
Suppose that $\chi$ has conductor $p^k$, with $k \geq 1$.  If $p=2$ then $\widehat{F}(\chi) = 0$.  If $p$ is odd and $\chi$ is not the Legendre symbol, then
\begin{equation}
|\widehat{F}(\chi)| = \frac{p}{p-1}.
\end{equation}
If $\chi$ is the Legendre symbol, then
\begin{equation}
\label{eq:FhathiLegendre}
|\widehat{F}(\chi)| = \frac{\sqrt{p}}{p-1}.
\end{equation}
\end{mylemma}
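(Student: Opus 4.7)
The plan is to work directly from the explicit formula \eqref{eq:Fhatdef2} for $\widehat{F}(\chi)$ with $c = p^k$, and to stratify the outer sum over $u$ by $r := v_p(u+1)$, $0 \le r \le k$. For each fixed $u$ I analyze the inner $t$-sum
\[
\Sigma(u) = \sum_{t \in (\mathbb{Z}/p^k)^*} \overline{\chi}(t)\, e_{p^k}(t(u+1)^2)
\]
according to the size of $2r$ relative to $k$.

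When $2r \ge k$, the exponential collapses to $1$ and $\Sigma(u) = 0$ by orthogonality, since $\chi$ is nontrivial. When $1 \le r < k/2$, writing $u+1 = p^r s$ with $(s,p) = 1$ gives $\Sigma(u) = \sum_t \overline{\chi}(t)\, e_{p^{k-2r}}(ts^2)$; the exponential depends only on $t \bmod p^{k-2r}$, so partitioning $(\mathbb{Z}/p^k)^*$ into cosets of $H := \ker\bigl((\mathbb{Z}/p^k)^* \to (\mathbb{Z}/p^{k-2r})^*\bigr)$ factors the sum through $\sum_{h \in H} \overline{\chi}(h)$. This vanishes because primitivity of $\chi$ mod $p^k$ means $\chi$ is nontrivial on $H_{k-1} := \ker\bigl((\mathbb{Z}/p^k)^* \to (\mathbb{Z}/p^{k-1})^*\bigr)$, and $H_{k-1} \subseteq H$ forces $\chi|_H$ to be nontrivial.

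For $p = 2$, every unit in $(\mathbb{Z}/2^k)^*$ is odd, so $r \ge 1$ automatically; the two cases above cover all $u$, giving $\widehat{F}(\chi) = 0$. This is the structural reason $p = 2$ is special. For $p$ odd only $r = 0$ survives; there $(u+1)^2$ is a unit and the standard Gauss-sum identity yields $\Sigma(u) = \chi((u+1)^2)\, \tau(\overline{\chi})$. Assembling,
\[
\widehat{F}(\chi) = \frac{\tau(\overline{\chi})}{\varphi(p^k)}\, J,
\qquad
J := \sum_{\substack{u \in (\mathbb{Z}/p^k)^* \\ (u+1,\, p) = 1}} \overline{\chi}(u)\, \chi(u+1)^2,
\]
a Jacobi-type sum, and $|\tau(\overline{\chi})| = p^{k/2}$ since $\chi$ is primitive.

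It remains to evaluate $|J|$. If $\chi$ is the Legendre symbol (forcing $k = 1$), then $\chi^2$ is trivial and $J = \sum_{u \ne 0,\, -1 \bmod p} \overline{\chi}(u) = -\overline{\chi}(-1)$, so $|J| = 1$ and $|\widehat{F}(\chi)| = \sqrt{p}/(p-1)$. Otherwise $\chi^2$ is primitive mod $p^k$ (for $p$ odd, squaring is a bijection on the cyclic $p$-part, so $\chi^2$ remains nontrivial on $H_{k-1}$; for $k=1$ this is exactly the non-Legendre hypothesis), and $\overline{\chi}\cdot\chi^2 = \chi$ is also primitive, so the classical identity $\tau(\chi_1)\tau(\chi_2) = J(\chi_1,\chi_2)\tau(\chi_1\chi_2)$ applied with $(\chi_1,\chi_2)=(\overline{\chi},\chi^2)$ yields $|J| = p^{k/2}$, hence $|\widehat{F}(\chi)| = p^{k/2}\cdot p^{k/2}/\varphi(p^k) = p/(p-1)$. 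The main technical point is the coset-killing step in the range $1 \le r < k/2$; this is where primitivity of $\chi$ enters essentially, and it simultaneously underlies the $p=2$ vanishing. Everything else reduces to standard Gauss and Jacobi sum manipulations.
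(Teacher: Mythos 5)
Your proof is correct and follows essentially the same route as the paper: evaluate the $t$-sum as a Gauss sum (your coset argument is just a hands-on verification of the standard identity $\sum_t \overline{\chi}(t)e_{p^k}(tn)=\chi(n)\tau(\overline{\chi})$ for primitive $\chi$ and arbitrary $n$, which also yields the $p=2$ vanishing), reduce to the Jacobi sum $J(\overline{\chi},\chi^2)$, and apply the Gauss--Jacobi relation, treating the Legendre symbol separately since $\chi^2$ is then imprimitive. No further comment is needed.
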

\begin{proof}
We use the formula \eqref{eq:Fhatdef2} and evaluate the $t$-sum in terms of a Gauss sum, giving
\begin{equation}
\widehat{F}(\chi) = \frac{\tau(\overline{\chi})}{\varphi(p^k)} 
\sum_{u \shortmod{p^k}} \overline{\chi}(u) \chi^2(u + 1)
= 
\chi(-1)
\frac{\tau(\overline{\chi}) J(\overline{\chi}, \chi^2) }{\varphi(p^k)} 
,
\end{equation}
where $J(\chi, \psi)$ is the Jacobi sum.
Note that if $p=2$ then $\widehat{F}(\chi) = 0$ since $2|u(u+1)$ for all $u$.  For $p$ odd then either $\chi^2$ has conductor $p^k$, or $\chi$ is the Legendre symbol (and so $k=1$).  In case $\chi^2$ has conductor $p^k$, then according to
\cite[(3.18)]{IK}, then $J(\overline{\chi}, \chi^2) = \frac{ \tau(\chi^2) \tau(\overline{\chi})}{\tau(\chi)}$, and so the proof is complete.  Finally, when $\chi$ is the Legendre symbol, then $\sum_{u \mymod{p}} \overline{\chi}(u) \chi^2(u+1) = - \chi(-1)$, giving \eqref{eq:FhathiLegendre}.
\end{proof}

\begin{mylemma}[Trivial case]
\label{lemma:FhatchiTrivialPrimeModulusBound}
Suppose that $\chi = \chi_0$ is trivial.  Then we have
\begin{equation}
\label{eq:FhatchiTrivialBound}
\widehat{F}(\chi) = 
\begin{cases}
\frac{1}{p-1}, \qquad &k=1, \\
p^{k/2}, \qquad &k \text{ even}, \\
0, \qquad &k \geq 3 \text{ odd}.
\end{cases}
\end{equation}
%
\end{mylemma}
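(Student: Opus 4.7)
My plan is to evaluate $\widehat{F}(\chi_0)$ directly from \eqref{eq:Fhatdef2} by carrying out the inner $t$-sum first, which collapses to a Ramanujan sum. When $\chi = \chi_0$ is trivial mod $p^k$, the factor $\overline{\chi_0}(ut)$ is simply the indicator of $(ut, p) = 1$, so
\begin{equation*}
\widehat{F}(\chi_0) = \frac{1}{\varphi(p^k)} \sum_{\substack{u \pmod{p^k} \\ (u,p) = 1}} c_{p^k}\bigl((u+1)^2\bigr),
\end{equation*}
where $c_{p^k}(a) = \sum_{t \pmod{p^k},\,(t,p)=1} e_{p^k}(ta)$ is the standard prime-power Ramanujan sum, equal to $\varphi(p^k)$ when $p^k \mid a$, to $-p^{k-1}$ when $p^{k-1} \| a$, and to $0$ otherwise.

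The key arithmetic observation is that $v_p((u+1)^2) = 2\,v_p(u+1)$ is \emph{always even}, so the parity of $k$ controls which branches of the Ramanujan-sum formula can be activated. Writing $j = v_p(u+1)$, for each $j \geq 1$ the number of $u \pmod{p^k}$ with $v_p(u+1) \geq j$ is $p^{k-j}$, and all such $u$ automatically satisfy $(u,p) = 1$ since $u \equiv -1 \pmod p$.

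I would then separate into the three stated cases. For $k = 1$, the unique $u \equiv -1 \pmod p$ contributes $\varphi(p) = p-1$ and the remaining $p-2$ units each contribute $-1$, totalling $1$; dividing by $\varphi(p)$ yields $1/(p-1)$. For $k = 2l$ even, the correction case $2j = k-1$ is impossible (even vs.\ odd), so only the $j \geq l$ terms survive, giving $p^l$ residues each with weight $\varphi(p^k)$, hence $\widehat{F}(\chi_0) = p^l = p^{k/2}$. For $k = 2l+1$ with $l \geq 1$, the main term arises from $j \geq l+1$ (contributing $p^l \cdot \varphi(p^k)$) and the correction from $j = l$ exactly (contributing $p^l(p-1) \cdot (-p^{k-1})$); the two expressions cancel identically, yielding $\widehat{F}(\chi_0) = 0$.

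I do not anticipate any real obstacle. The only subtlety worth flagging is the parity interaction: the fact that $v_p((u+1)^2)$ is always even forces the ``edge'' Ramanujan contribution $-p^{k-1}$ to be active precisely in the odd-$k$ case, where it exactly balances the main term to produce the vanishing asserted in the third case.
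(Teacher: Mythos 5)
Your proposal is correct and follows essentially the same route as the paper: evaluate the inner $t$-sum as a Ramanujan sum modulo $p^k$, then split according to $v_p(u+1)$ and use the parity of $v_p((u+1)^2)$ to see which branch of the Ramanujan-sum formula is active in each case. The case computations (the value $1/(p-1)$ for $k=1$, the count $p^{k/2}$ of residues $u\equiv -1 \pmod{p^{k/2}}$ for even $k$, and the exact cancellation between the $d=p^k$ and $d=p^{k-1}$ contributions for odd $k\geq 3$) all check out and match the paper's argument.
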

\begin{proof}
We return to \eqref{eq:Fhatdef2}, and evaluate the sum over $t$ as a Ramanujan sum, giving
\begin{equation}
\widehat{F}(\chi_0) = \frac{1}{\varphi(p^k)} 
\sum_{d|p^k} d \mu(p^k/d)
\sumstar_{\substack{ u \shortmod{p^k} \\ (u  + 1)^2 \equiv 0 \shortmod{d}}}  1
.
\end{equation}
If $k=1$ it is easy to check \eqref{eq:FhatchiTrivialBound} by brute force.
If $k$ is even, then the condition $(u  + 1)^2 \equiv 0 \pmod{d}$, for both $d=p^k$ and $d=p^{k-1}$, is equivalent to $u  + 1 \equiv 0 \pmod{p^{k/2}}$.  The desired formula then follows easily.

Finally, consider $k \geq 3$ odd.  
For $d= p^j$ with $j \in \{k, k-1 \}$,  the condition $(u + 1)^2 \equiv 0 \pmod{d}$ is equivalent to $u + 1 \equiv 0 \pmod{p^{\lceil j/2\rceil}}$.
Thus
\begin{equation}
\label{eq:potato}
\widehat{F}(\chi) = 
\frac{1}{\varphi(p^k)} 
\Big(
p^k \sumstar_{\substack{ u \shortmod{p^k} \\ u + 1 \equiv 0 \shortmod{p^{\frac{k+1}{2}}}}}  1
-
p^{k-1} \sumstar_{\substack{ u \shortmod{p^k} \\ u  + 1 \equiv 0 \shortmod{p^{\frac{k-1}{2}}}}}  1
\Big) = 0. \qedhere
\end{equation}
\end{proof}
From Lemma \ref{lemma:FhatchiTrivialPrimeModulusBound} we easily deduce:
\begin{mycoro}
\label{coro:chitrivialAvgBound}
For each integer $c \geq 1$, let $\chi_0$ denote the trivial character modulo $c$; then
\begin{equation}
\sum_{c \leq x} c^{-1} |\widehat{F}(\chi_0)| \ll x^{\varepsilon}.
\end{equation}
\end{mycoro}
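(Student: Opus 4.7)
The plan is to combine the multiplicativity in Lemma \ref{lemma:absFhatIsMultiplicative} with the explicit prime-power evaluations in Lemma \ref{lemma:FhatchiTrivialPrimeModulusBound} to obtain a closed-form expression for $|\widehat{F}(\chi_0)|$, and then estimate the resulting Dirichlet sum directly. First I would note that for the trivial character $\chi_0$ modulo $c$ and any coprime factorization $c = c_1 c_2$, the corresponding factors $\chi_{0,1}, \chi_{0,2}$ are trivial as well, so the twisting factor $\overline{\chi_{0,1}}(c_2)\overline{\chi_{0,2}}(c_1) = 1$; hence $c \mapsto |\widehat{F}(\chi_0)|$ is genuinely multiplicative. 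Lemma \ref{lemma:FhatchiTrivialPrimeModulusBound} then forces $|\widehat{F}(\chi_0)| = 0$ unless every prime $p \mid c$ satisfies $v_p(c) = 1$ or $v_p(c)$ even. Consequently, $|\widehat{F}(\chi_0)|$ is supported on $c$ of the form $c = ab^2$ with $a$ squarefree and $(a,b)=1$, and for such $c$ the multiplicativity together with the prime-power values yields
\begin{equation*}
|\widehat{F}(\chi_0)| = \prod_{p \mid a} \frac{1}{p-1} \cdot \prod_{p \mid b} p^{v_p(b)} = \frac{b}{\varphi(a)}.
\end{equation*}

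With this explicit description in hand, the sum to be bounded rewrites as
\begin{equation*}
\sum_{c \leq x} \frac{|\widehat{F}(\chi_0)|}{c} = \sum_{\substack{ab^2 \leq x \\ (a,b)=1,\ a \text{ squarefree}}} \frac{1}{a\, b\, \varphi(a)}.
\end{equation*}
Invoking the standard lower bound $\varphi(a) \gg a^{1-\varepsilon}$ and dropping the coprimality and squarefree constraints upstairs, this is dominated by
\begin{equation*}
\ll \sum_{b \leq \sqrt{x}} \frac{1}{b} \sum_{a \geq 1} \frac{1}{a^{2-\varepsilon}} \ll \log x \ll x^{\varepsilon},
\end{equation*}
which is the claimed bound. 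There is no substantive obstacle here; the only nontrivial step is the multiplicative reduction, and once Lemma \ref{lemma:FhatchiTrivialPrimeModulusBound} is applied prime-by-prime the remaining estimate is an elementary exercise. (One could alternatively phrase this as comparing the partial Euler product $\prod_{p \leq x} \bigl(1 + \tfrac{1}{p(p-1)} + \tfrac{1}{p-1}\bigr) \ll (\log x)^{O(1)}$, which yields the same conclusion with even a slightly sharper logarithmic power.)
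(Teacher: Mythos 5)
Your argument is correct and is exactly the deduction the paper leaves implicit (it states only that the corollary follows ``easily'' from Lemma \ref{lemma:FhatchiTrivialPrimeModulusBound} together with multiplicativity). The factorization $c=ab^2$ with $a$ squarefree, $(a,b)=1$, the value $b/\varphi(a)$, and the convergent sum over $a$ with the harmonic sum over $b\le\sqrt{x}$ all check out.
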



\begin{mydefi}
Suppose $\chi$ is a Dirichlet character of modulus $c$ and conductor $c^*$.  We say $\chi$ is \emph{semi-primitive} if for every prime $p|c$, we have $1 \leq v_p(c^*) < v_p(c)$, where $v_p$ is the $p$-adic valuation.
\end{mydefi}

\begin{mylemma}[Semi-primitive case]
\label{lemma:FhatchiSemiPrimitiveBounds}
Suppose that $\chi$ has conductor $p^j$, with $1 \leq j < k$.  
If $j \not \equiv k \pmod{2}$ then $\widehat{F}(\chi) = 0$.  
If $j \equiv k \pmod{2}$, then
\begin{equation}
\label{eq:FhatSemiPrimitiveBounds}
|\widehat{F}(\chi)| 
\leq 
\begin{cases}
p^{k/2}, \qquad & \chi^2 =1, \\
0, \qquad & \chi^2 \neq 1, \text{ $p$ odd}, \\
0, \qquad &\chi^2 \neq 1, \text{ $p=2$},  k > j+2,  \\
2^{5/2}, \qquad &\chi^2 \neq 1, \text{ $p=2$}, k=j+2.
\end{cases}
\end{equation}
\end{mylemma}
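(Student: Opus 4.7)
The plan is to evaluate $\widehat{F}(\chi)$ starting from \eqref{eq:Fhatdef2} by executing the inner $t$-sum as a (possibly imprimitive) Gauss sum, then analyzing the resulting $u$-sum in cases. Let $\chi^*$ denote the primitive character of conductor $p^j$ inducing $\chi$, and consider
\[
S_t(u) := \sum_{\substack{t \bmod p^k \\ (t,p)=1}} \overline{\chi^*}(t)\, e_{p^k}(t(u+1)^2).
\]
Writing $(u+1)^2 = p^a m'$ with $(m',p)=1$ and splitting $t$ modulo $p^{\max(j,\,k-a)}$, the standard vanishing criterion for imprimitive Gauss sums forces $S_t(u) = 0$ unless $a = k-j$, in which case $S_t(u) = p^{k-j}\, \chi^*(m')\, \tau(\overline{\chi^*})$. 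Since $a = 2v_p(u+1)$ is necessarily even, $\widehat{F}(\chi) = 0$ whenever $k - j$ is odd, establishing the first assertion of the lemma.

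Assume $k \equiv j \pmod 2$, set $b = (k-j)/2 \geq 1$, and write $u + 1 = p^b w$ with $(w,p) = 1$; then $w$ runs over units mod $p^{k-b}$, $m' = w^2$, and
\[
\widehat{F}(\chi) = \frac{p^{k-j}\, \tau(\overline{\chi^*})}{\varphi(p^k)} \sum_{\substack{w \bmod p^{k-b} \\ (w,p)=1}} \overline{\chi^*}(p^b w - 1)\, \chi^{*2}(w).
\]
I would split into Case A ($b \geq j$, forcing $k \geq 3j$) and Case B ($b < j$). In Case A, $p^b w - 1 \equiv -1 \pmod{p^j}$, so the outer factor collapses to the constant $\chi^*(-1)$ and the $w$-sum reduces to $\chi^*(-1) \sum_w \chi^{*2}(w)$, which vanishes unless $\chi^{*2} = 1$. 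When $\chi^{*2} = 1$, a direct computation using $|\tau(\overline{\chi^*})| = p^{j/2}$ and $\varphi(p^{k-b})/\varphi(p^k) = p^{-b}$ yields $|\widehat{F}(\chi)| = p^{k - j/2 - b} = p^{k/2}$.

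In Case B, I would substitute $w = w_1 + p^{j-b} w_2$ with $w_1$ a unit mod $p^{j-b}$ and $w_2 \bmod p^b$. The outer factor $\overline{\chi^*}(p^b w - 1)$ depends only on $w_1$, so the $w_2$-sum reduces to $\sum_{x \bmod p^b} \chi^{*2}(1 + p^{j-b} x)$, which is the sum of $\chi^{*2}$ over the subgroup $H = 1 + p^{j-b}(\mathbb{Z}/p^j\mathbb{Z})$ of $(\mathbb{Z}/p^j)^*$, of order $p^b$. By orthogonality this equals $p^b$ or $0$ according as the conductor of $\chi^{*2}$ divides $p^{j-b}$ or not. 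For $p$ odd with $j \geq 2$ (forced by $1 \leq b < j$), cyclicity of $(\mathbb{Z}/p^j)^*$ and primitivity of $\chi^*$ force $\chi^{*2}$ to have conductor $p^j$ (unless $\chi^{*2} = 1$, which would force $\chi^*$ to be Legendre and $j = 1$, a contradiction); so $\widehat{F}(\chi) = 0$.

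For $p = 2$, primitivity of $\chi^*$ implies $\chi^{*2}(1 + 2^{j-1}) = 1$, so $\chi^{*2}$ always factors through $(\mathbb{Z}/2^{j-1})^*$; the inner sum is nonzero iff $b \leq 1$, i.e., $k = j + 2$, and vanishes for $k \geq j + 4$. In the critical case $k = j + 2$, the outer sum $V_{\mathrm{out}} = \sum_{w_1 \bmod 2^{j-1},\, \mathrm{unit}} \overline{\chi^*}(2 w_1 - 1)\, \chi^{*2}(w_1)$ can be identified (via the change of variables $v = 2 w_1 - 1$ running over odd residues mod $2^j$) with a Gauss-type sum modulo $2^{j-1}$, yielding $|V_{\mathrm{out}}| \leq 2^{(j-1)/2}$ and hence $|\widehat{F}(\chi)| \leq 2^{5/2}$ after combining with the prefactor. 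The main obstacle is this $p = 2$, $k = j + 2$ subcase: because $(\mathbb{Z}/2^j)^*$ is not cyclic for $j \geq 3$, the square of a primitive character acquires conductor $2^{j-1}$ instead of $2^j$, and the resulting residual contribution must be matched explicitly to a Gauss sum to sharpen the bound to the stated constant $2^{5/2}$.
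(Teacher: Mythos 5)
Your opening steps are correct and essentially the paper's: evaluating the $t$-sum in \eqref{eq:Fhatdef2} as an imprimitive Gauss sum forces $(u+1)^2=p^{k-j}m'$ exactly, which immediately kills $j\not\equiv k\pmod 2$, and the parametrization $u+1=p^bw$ with $b=(k-j)/2$ produces the same sum that the paper reaches before \eqref{eq:FhatchiSemiPrimitiveSimplified}. For $p$ odd your endgame is complete and genuinely different from the paper's: instead of Fourier-inverting $\overline{\chi^*}$ to reduce everything to the Gauss-type sum $\sum_y \chi^{*2}(y)e_{p^j}(p^{b}y)$, you split on $b\ge j$ versus $b<j$ and use orthogonality of $\chi^{*2}$ over the subgroup $1+p^{j-b}\mathbb{Z}$, together with the fact that primitivity of $\chi^*$ mod $p^j$, $j\ge 2$, $p$ odd, forces $\chi^{*2}$ to be primitive; this cleanly yields the first two lines of \eqref{eq:FhatSemiPrimitiveBounds}.

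At $p=2$, however, there are genuine gaps. First, in the decisive case $\chi^2\neq 1$, $k=j+2$, the bound $|V_{\mathrm{out}}|\le 2^{(j-1)/2}$ is exactly what is needed (it is in fact an equality), but your proposed justification---substituting $v=2w_1-1$ and ``identifying with a Gauss-type sum mod $2^{j-1}$''---does not go through as stated: $2$ is not invertible mod $2^j$, so $\chi^{*2}(w_1)=\chi^{*2}\bigl((v+1)/2\bigr)$ is not a Dirichlet character in $v$, and no direct change of variables linearizes the sum. The step you are missing is the paper's Fourier/Gauss-sum inversion $\overline{\chi^*}(x)=\tau(\chi^*)^{-1}\sum_{t}\chi^*(t)e_{2^j}(tx)$ applied to the factor $\overline{\chi^*}(2w_1-1)$, after which the $w_1$-sum literally becomes a Gauss sum for $\chi^{*2}$ of conductor $2^{j-1}$, of modulus $2^{(j-1)/2}$. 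Second, the subcase $p=2$, $\chi^2=1$ (so $j\in\{2,3\}$) with $b<j$ (e.g.\ $j=3$, $k=5$) is covered by neither your Case A (which needs $b\ge j$) nor your $p=2$ discussion (which presumes $\chi^{*2}$ has conductor exactly $2^{j-1}$, i.e.\ $j\ge 4$, and wrongly asserts the inner subgroup sum vanishes for $b\ge 2$ when in fact it equals $2^b$ here); trivial estimation of the $w_1$-sum by its length does give the bound $2^{k/2}$, as in the paper's treatment of $j=2,3$, but you must say so. Third, a bookkeeping point that matters for the exact constants: writing $w=w_1+p^{j-b}w_2$ with $w_2$ mod $p^b$ only parametrizes $w$ modulo $p^{j}$, whereas $w$ runs modulo $p^{k-b}=p^{j+b}$; the dropped repetition factor $p^{b}$ is harmless for the vanishing statements but must be restored to obtain the stated constants $p^{k/2}$ and $2^{5/2}$ (with it restored, your $|V_{\mathrm{out}}|\le 2^{(j-1)/2}$ does give exactly $2^{5/2}$).
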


\begin{proof}
We return to the formula \eqref{eq:Fhatdef2}.  
Suppose that $\psi$ of conductor $p^j$ induces $\chi$.
Changing variables $t \rightarrow t + p^j$ keeps $\overline{\chi}(t)$ invariant, showing that the inner sum over $t$ vanishes unless $(u+1)^2 \equiv 0 \pmod{p^{k-j}}$.  The sum over $t$ is hence a Gauss sum repeated $p^{k-j}$ times, giving
\begin{equation}
\widehat{F}(\chi) = \frac{p^{k-j} \tau(\overline{\psi})}{\varphi(p^k)} 
\sum_{\substack{u \shortmod{p^k} \\(u  + 1)^2 \equiv 0 \shortmod{p^{k-j}}}} \overline{\psi}(u) \psi\Big(\frac{(u  + 1)^2}{p^{k-j}}\Big).
\end{equation}
If $k-j$ is odd, then the congruence condition implies $(u  + 1)^2 \equiv 0 \pmod{p^{k-j+1}}$, which causes each summand to vanish.  This proves that $\widehat{F}(\chi) = 0$ if $j \not \equiv k \pmod{2}$.

We proceed under the assumption $j \equiv k \pmod{2}$.  The congruence condition is equivalent to $u  + 1 \equiv 0 \pmod{p^{\frac{k-j}{2}}}$.    We write this as $u = - (1+ p^{\frac{k-j}{2}} y)$, where $y$ now runs modulo $p^{\frac{k+j}{2}}$.  This gives
\begin{equation}
\widehat{F}(\chi) = \frac{p^{k-j} \tau(\overline{\psi})\psi(-1)}{\varphi(p^k)} 
\sum_{\substack{y \shortmod{p^{\frac{k+j}{2}}} }} 
\overline{\psi}(1+ p^{\frac{k-j}{2}} y) 
\psi^2(y).
\end{equation}
Using Fourier inversion in the form $\overline{\psi}(x) = \frac{1}{\tau(\psi)} \sum_{t \mymod{p^j}} \psi(t) e_{p^j}(tx)$, applied with $x=1 + p^{\frac{k-j}{2}} y$, combined with simplifications, gives
\begin{equation}
\sum_{\substack{y \shortmod{p^{\frac{k+j}{2}}} }} 
\overline{\psi}(1+ p^{\frac{k-j}{2}} y) 
\psi^2(y)
=
\frac{\tau(\overline{\psi})}{\tau(\psi)} 
\sum_{\substack{y \shortmod{p^{\frac{k+j}{2}}} }} 
\psi^2(y) e_{p^j}(p^{\frac{k-j}{2}} y).
\end{equation}
Noting the inner sum is the same sum repeated $\frac{p^{\frac{k+j}{2}}}{p^{j}}$ times, we obtain
\begin{equation}
\label{eq:FhatchiSemiPrimitiveSimplified}
\widehat{F}(\chi) = \frac{p^{k-j} \tau(\overline{\psi})^2\psi(-1)}{\varphi(p^k) \tau(\psi)} p^{\frac{k-j}{2}}
\sum_{\substack{y \shortmod{p^{j}} }} 
\psi^2(y)
 e_{p^j}(p^{\frac{k-j}{2}} y)
.
\end{equation}

Now our work breaks into cases.  First suppose that the conductor of $\psi^2$ is $p^j$.  For $p$ odd, this condition is equivalent to $\chi^2 \neq 1$.  In this case, the sum vanishes, giving the second line of \eqref{eq:FhatSemiPrimitiveBounds}.  If $\chi^2 =1$ then $j=1$, and direct evaluation gives a bound consistent with the first line of \eqref{eq:FhatSemiPrimitiveBounds}

Finally, consider $p=2$.  
If $j \in \{2, 3\}$ then $\chi^2 = 1$, and if $j \geq 4$ then the conductor of $\psi^2$ must equal $2^{j-1}$.  Also, note that there are no characters of conductor $2^1$, so this covers everything.  
For $j=2,3$ (equivalently, $\chi^2 = 1$), we bound the inner sum over $y$ trivially, giving the desired bound.  Next suppose $j \geq 4$, so $\psi^2$ has conductor $2^{j-1}$.  If $k > j+2$ then the sum vanishes.  If $k=j+2$, then the inner sum over $y$ is simply a Gauss sum (of modulus $2^{j-1}$) repeated twice. This gives the claimed bound.
\end{proof}

\begin{mycoro}
\label{coro:chiSemiPrimitiveAvgBound}
We have
\begin{equation}
\sum_{c \leq x}  \sum_{\substack{\chi_2 \shortmod{c} \\ \text{semi-primitive}}} 
c^{-1}
|\widehat{F}(\chi_2)| \ll x^{\varepsilon}.
\end{equation}
\end{mycoro}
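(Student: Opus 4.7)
The plan is to combine the multiplicativity of $|\widehat{F}(\chi)|$ from Lemma \ref{lemma:absFhatIsMultiplicative} with the explicit prime-power bounds of Lemma \ref{lemma:FhatchiSemiPrimitiveBounds}, and then sum efficiently over admissible moduli.  Set
\[
A_p(k) = \sum_{\chi_p \text{ semi-prim mod } p^k} |\widehat{F}(\chi_p)|.
\]
A character $\chi$ is semi-primitive modulo $c = \prod p^k$ precisely when each local component $\chi_p$ is semi-primitive modulo $p^k$, so
\[
\sum_{\chi \text{ semi-prim mod } c} |\widehat{F}(\chi)| = \prod_{p^k \| c} A_p(k),
\]
and the corollary reduces to bounding $\sum_{c \leq x} c^{-1} \prod_{p^k \| c} A_p(k)$.

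The next step is to extract $A_p(k)$ from Lemma \ref{lemma:FhatchiSemiPrimitiveBounds}.  For $p$ odd, the nonzero branch demands $\chi^2 = 1$, and the only quadratic character mod $p^j$ is the Legendre symbol (conductor $p$); the parity condition then forces $k$ odd with $k \geq 3$, giving $A_p(k) \leq p^{k/2}$ and $A_p(k) = 0$ otherwise.  For $p = 2$, I would count separately the $O(1)$ semi-primitive characters with $\chi^2 = 1$ (supported on conductor $4$ or $8$) and the primitive characters of conductor $2^{k-2}$ with $\chi^2 \neq 1$ (of which there are at most $2^{k-4}$ for $k \geq 5$, each contributing $\leq 2^{5/2}$), obtaining $A_2(k) \ll 2^{k/2} + 2^{k-3/2}$.

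Writing $c = 2^{k_0} m$ with $m$ odd, the nonvanishing of the product forces $m$ to have every prime factor appearing to an odd exponent $\geq 3$; in particular $m$ is cube-full.  The local bounds yield $m^{-1} \prod_{p \mid m} A_p(v_p(m)) \leq m^{-1/2}$, and since the number of cube-full integers up to $X$ is $O(X^{1/3})$, partial summation gives $\sum_{m \leq X, \text{ admissible}} m^{-1/2} = O(1)$ uniformly in $X$.  Meanwhile $2^{-k_0} A_2(k_0) \ll 2^{-k_0/2} + 2^{-3/2}$, and summing over $k_0 \leq \log_2 x$ produces $O(\log x) = O(x^{\varepsilon})$, which completes the bound.

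The only mildly delicate point will be the $2$-adic bookkeeping: the fourth branch of Lemma \ref{lemma:FhatchiSemiPrimitiveBounds} (namely $\chi^2 \neq 1$ with $k = j + 2$) yields a per-$k_0$ contribution of size $2^{-3/2}$ that does not decay, so the $\log x$ loss is genuine and must be absorbed into $x^{\varepsilon}$.  Enumerating the primitive characters modulo $2^{j}$ using the structure $(\mathbb{Z}/2^j)^{\ast} \cong \mathbb{Z}/2 \times \mathbb{Z}/2^{j-2}$ for $j \geq 3$ is routine.
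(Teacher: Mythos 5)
Your argument is correct and takes essentially the same route as the paper: both use the multiplicativity of $|\widehat{F}|$ to reduce to the local bounds of Lemma \ref{lemma:FhatchiSemiPrimitiveBounds}, and both exploit that the admissible odd moduli (every prime exponent odd and $\geq 3$, i.e.\ the cube-full set, which is the paper's $b_1 b_2^2$ with $b_1$ square-free and $b_1 \mid b_2$) are sparse enough that the square-root bound sums to $O(x^{\varepsilon})$. Your $2$-adic bookkeeping is in fact slightly more explicit than the paper's, which simply invokes $|\widehat{F}(\chi)| \ll 1$ on powers of $2$.
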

\begin{proof}
By Lemma \ref{lemma:absFhatIsMultiplicative}, it suffices to show the bound separately for $c$ running over odd intgers, and for $c$ running over powers of $2$.  
For $c$ a power of $2$, we simply use that $|\widehat{F}(\chi)| \ll 1$.

Now consider $c$ odd.
Lemma \ref{lemma:FhatchiSemiPrimitiveBounds} implies that $c$ factors in the form $b_1 b_2^2$ where $b_1$ is square-free, $b_1 | b_2$, and $\chi_2$ is the character of modulus $b_1 b_2^2$ induced by the Jacobi symbol of conductor $b_1$.  Then $|\widehat{F}(\chi_2)| \ll (b_1 b_2^2)^{1/2}$.  To complete the proof, we use
\begin{equation}
\sum_{b_1 b_2^2 \leq x} \sum_{b_1 | b_2} \frac{(b_1 b_2^2)^{1/2}}{b_1 b_2^2} \ll x^{\varepsilon}. \qedhere
\end{equation}
\end{proof}

\section{Completing the proof of Theorem \ref{thm:mainthm}}
\label{section:continue}
We apply Lemma \ref{lemma:DmnFourierMellin} to \eqref{eq:Kformula}, giving
\begin{equation}
\label{eq:Kformula1}
\mathcal{K}(\Delta, T, N)
=
\sum_{X \text{ dyadic}} 
 \sum_{d}
\sum_{g} \frac{\mu(g)}{g} 
 \int_{(\sigma)} \frac{\widetilde{G}_X(-s)}{(dg)^s}
\sum_{c =1}^{\infty} 
\sum_{\chi \shortmod{c}} 
\frac{\widehat{F}(\chi) \chi(\overline{dg})}{c^{1+s}}
B(\chi, s)
 \frac{ds}{2 \pi i}
 ,
\end{equation}
where $\sigma \gg 1$ is large, and where
\begin{equation}
B(\chi, s)
=
\sum_{\substack{d|(m^2, n^2) \\ g|mn/d }} a_m \overline{a_n} \chi(mn) (mn)^s.
\end{equation}
By Lemma \ref{lemma:GMellinproperties}, and shifting the contour far to the right, we may truncate the sum at $cdg \ll C_{\text{max}} := \frac{N^2}{X} (NT)^{\varepsilon}$.  After this truncation all the sums are finite, and we then shift the contour of integration to $\text{Re}(s) = 0$.  Then
\begin{equation}
\label{eq:Kformula2}
|\mathcal{K}(\Delta, T, N)| \ll
\sum_{X \text{ dyadic}} 
\sum_{dg \ll C_{\text{max}}} \frac{1}{g} 
\int_{|t| \ll P T^{\varepsilon} } \frac{\Delta T}{(XP)^{1/2}} 
\sum_{c \ll \frac{C_{\text{max}}}{dg}}
\sum_{\chi \shortmod{c}} \frac{|\widehat{F}(\chi)|}{c} |B(\chi, it)| dt,
\end{equation}
plus a small error term.
Next we factor $c$ and $\chi$ as follows.  Write $\chi = \chi_0 \chi_1 \chi_2$ and $c = c_0 c_1 c_2$, with $(c_i, c_j) = 1$ for $i \neq j$, and where $\chi_j$ has modulus $c_j$.
The factorization is characterized by the assumption that $\chi_0$ is trivial, $\chi_1$ is primitive modulo $c_1$, and that $\chi_2$ is semi-primitive.  This factorization corresponds to the three cases from Lemmas \ref{lemma:FhatchiPrimitiveModulusBound}, \ref{lemma:FhatchiTrivialPrimeModulusBound}, and \ref{lemma:FhatchiSemiPrimitiveBounds}.
Lemma \ref{lemma:absFhatIsMultiplicative} implies $|\widehat{F}(\chi_0 \chi_1 \chi_2)| = |\widehat{F}(\chi_0) \widehat{F}(\chi_1) \widehat{F}(\chi_2)|$.  Using this factorization, and arranging the expression appropriately, we have
\begin{multline}
\label{eq:KboundInTermsOfB}
|\mathcal{K}(\Delta, T, N)| \ll
\sum_{X \text{ dyadic}} 
\sum_{dg \ll C_{\text{max}}} \frac{1}{g} 
\sum_{c_0 c_2 \ll \frac{C_{\text{max}}}{dg}} 
\sum_{\substack{\chi_2 \shortmod{c_2} \\ \text{semi-primitive}}} 
\frac{|\widehat{F}(\chi_0) \widehat{F}(\chi_2)|}{c_0 c_2}
\\
\int_{|t| \ll P T^{\varepsilon} } \frac{\Delta T}{(XP)^{1/2}} 
\sum_{c_1\ll \frac{C_{\text{max}}}{dg c_0 c_2}}
\sumstar_{\chi_1 \shortmod{c_1}} \frac{|\widehat{F}(\chi_1)|}{c_1} |B(\chi_1 \chi_0 \chi_2, it)| dt,
\end{multline}
plus a small error term.

Lemma \ref{lemma:FhatchiPrimitiveModulusBound} implies $|\widehat{F}(\chi_1)| \ll c_1^{\varepsilon}$.  Then by Corollary \ref{coro:LargeSieveVariant} (absorbing $\chi_0 \chi_2$ into the definition of the vector ${\bf a}$), we have that the second line in \eqref{eq:KboundInTermsOfB} satisfies the bound
\begin{multline}
\int_{|t| \ll P T^{\varepsilon} } \frac{\Delta T}{(XP)^{1/2}} 
\sum_{c_1\ll \frac{C_{\text{max}}}{dg c_0 c_2}}
\sumstar_{\chi_1 \shortmod{c_1}} \frac{|\widehat{F}(\chi_1)|}{c_1} |B(\chi_1 \chi_0 \chi_2, it)| dt
\\
\ll (NT)^{\varepsilon} \max_{1 \leq C_1 \ll \frac{C_{\mathrm{max}}}{dg c_0 c_2}} \frac{\Delta T}{C_1 (XP)^{1/2}}
\Big( C_1^2 P + \frac{N}{d'} \Big) \sum_n |a_{nd'}|^2.
\end{multline}
Simplifying and substituting into \eqref{eq:KboundInTermsOfB}, we obtain
\begin{multline}
|\mathcal{K}(\Delta, T, N)| \ll
\sum_{X \text{ dyadic}} 
\sum_{dg \ll C_{\text{max}}} \frac{1}{g} 
\sum_{c_0 c_2 \ll \frac{C_{\text{max}}}{dg}} 
\sum_{\substack{\chi_2 \shortmod{c_2} \\ \text{semi-primitive}}} 
\frac{|\widehat{F}(\chi_0) \widehat{F}(\chi_2)|}{c_0 c_2}
\\
\Delta T
(NT)^{\varepsilon}
\Big(\frac{C_{\text{max}} \sqrt{P}}{dg c_0 c_2 \sqrt{X}} + \frac{N}{d' \sqrt{XP}} \Big) \sum_{n} |a_{nd'}|^2.
\end{multline}
Next we apply Corollaries \ref{coro:chitrivialAvgBound} and \ref{coro:chiSemiPrimitiveAvgBound} to treat the sums over $c_0$ and $c_2$.  We also sum trivially over $g$, which is bounded by a truncated harmonic series.  The sum over $d$ is similar, since $\sum_{d \leq x} \frac{1}{d'} \ll x^{\varepsilon}$.  In all, this gives
\begin{equation}
|\mathcal{K}(\Delta, T, N)| \ll
\sum_{\substack{X \gg \Delta T \\ \text{dyadic}}} 
\Delta T
(NT)^{\varepsilon}
\Big(\frac{C_{\text{max}} \sqrt{P}}{\sqrt{X}} + \frac{N}{\sqrt{XP}} \Big) |{\bf a}|^2
.
\end{equation}
The former term with $\frac{C_{\text{max}} \sqrt{P}}{d \sqrt{X}}$ matches with the first term in \eqref{eq:sketchpartBoundchiPrimitive}; precisely, we have
\begin{equation}
\max_{X \gg \Delta T} \Delta T \frac{N^2}{X} \frac{\sqrt{1 + \frac{T^2}{X}}}{\sqrt{X}} \ll \frac{N^2}{\Delta}.
\end{equation}
For the latter term, we have
\begin{equation}
\max_{X \gg \Delta T} \Delta T \frac{N}{\sqrt{XP}} \ll \Delta N,
\end{equation}
which matches with \eqref{eq:sketchpartchiTrivialCNterm}.  

To summarize, we have shown a bound
\begin{equation}
\mathcal{M}(\Delta, T, N) \ll \Delta T + (NT)^{\varepsilon} \Big(\Delta N + \frac{N^2}{\Delta} \Big)
\end{equation}
which agrees with \eqref{eq:sketchpartPenultimateBound}.  The proof of Theorem \ref{thm:mainthm} is completed by choosing $\Delta$ as described in the paragraph following \eqref{eq:sketchpartPenultimateBound}. \qed

\section{Lower bounds}
\label{section:lowerbound}
\subsection{Proof of Theorem \ref{thm:lowerboundEisPart}}
\label{section:lowerbound1}
We briefly sketch the proof of Theorem \ref{thm:lowerboundEisPart}.  We take $a_n = 1$ for $n=p$ prime, and $a_n = 0$, otherwise.  Note that $\tau_{it}(p^2) = 1 + p^{2it} + p^{-2it}$.  Let
$ A = \sum_p 1$ and $B(t) = \sum_p (p^{2it} + p^{-2it})$, so that
\begin{equation}
\int_{T}^{T+\Delta} 
\Big| \sum_{N \leq n \leq 2N} a_n \tau_{it}(n^2)\Big|^2 dt
=
\int_{T}^{T+\Delta} |A  + B(t)|^2 dt
= \int_T^{T+\Delta} |A |^2 + 2\text{Re}(A  \overline{B}(t)) + |B(t)|^2 dt.
\end{equation}
Note 
\begin{equation}
\label{eq:remarksIntroCtsMainTerm?}
\int_T^{T+\Delta} |A|^2 dt = \int_T^{T+\Delta} \Big|\sum_{p \sim N} a_p\Big|^2 dt  = \Delta \frac{N^2}{(\log{N})^2} (1+o(1)),
\end{equation}
by the prime number theorem.  Meanwhile, by the mean value theorem for Dirichlet polynomials (see Theorem \ref{thm:Gallagher} with $Q=1$), we have
\begin{equation}
\int_{T}^{T+\Delta} |B(t)|^2 dt \ll (\Delta + N) \sum_p 1 \ll (\Delta + N) \frac{N}{\log{N}}.
\end{equation}
Moreover, by Cauchy-Schwarz, we have
\begin{equation}
\int_T^{T+\Delta} |A B(t)| dt \ll \Delta^{1/2} (\Delta + N)^{1/2} \Big(\frac{N}{\log{N}} \Big)^{3/2}.
\end{equation}
Thus
\begin{equation}
\int_{T}^{T+\Delta} 
\Big| \sum_{N \leq n \leq 2N} a_n \tau_{it}(n^2)\Big|^2 dt
= \Delta \frac{N^2}{\log^2{N}} \Big[1+o(1) + O\Big(\frac{(\log{N})^{1/2}}{\Delta^{1/2}}\Big) \Big]. 
\end{equation}
Since $\sum_n |a_n|^2 \sim \frac{N}{\log{N}}$, and using $w_t^{-1} = T^{o(1)}$, this shows Theorem \ref{thm:lowerboundEisPart}.

\subsection{Proof of Proposition \ref{prop:sym2lowerbound}}
The idea and details of the proof are similar to those presented in Section \ref{section:lowerbound1}, so we will be brief.
We take $a_n$ defined by $a_{n} = 1$ for $n=p^2 \asymp N$, with $p$ prime, and $a_n = 0$ otherwise.  Note $\lambda_{\mathrm{sym}^2 u_j}(p^2) = 1 + \lambda_j(p^2)$.  Let $A = \sum_{p} a_{p^2} \asymp \frac{\sqrt{N}}{\log{N}}$ and $B_j = \sum_{p} \lambda_j(p^2)$.  Then we have 
\begin{equation}
\sum_{T \leq t_j \leq T + \Delta} w_j^{-1} |A + B_j|^2 
=
\sum_{T \leq t_j \leq T + \Delta} w_j^{-1} \Big(|A|^2 + 2 \text{Re}(A \overline{B_j}) + |B_j|^2 \Big).
\end{equation}
The rest of the proof now plays out nearly identically to that of Theorem \ref{thm:lowerboundEisPart}.  The conditions in place in Proposition \ref{prop:sym2lowerbound}, together with \eqref{eq:Sym2TrivialBound}, show that $\sum_{t_j} w_j^{-1} |B_j|^2 = o(\sum_{t_j} w_j^{-1} |A|^2)$, which implies that
\begin{equation}
\sum_{T \leq t_j \leq T+\Delta} w_j^{-1} |A+B_j|^2 \gg \Delta T |A|^2
\asymp \Delta T \frac{\sqrt{N}}{\log{N}} |{\bf a}|^2. \qedhere
\end{equation}

\end{document}